\documentclass[hidelinks,onefignum,onetabnum]{siamart251216}

\usepackage{lipsum}
\usepackage{amsfonts}
\usepackage{graphicx}
\usepackage{epstopdf}
\usepackage{algorithmic}
\ifpdf
  \DeclareGraphicsExtensions{.eps,.pdf,.png,.jpg}
\else
  \DeclareGraphicsExtensions{.eps}
\fi

\newsiamremark{remark}{Remark}
\newsiamremark{hypothesis}{Hypothesis}
\crefname{hypothesis}{Hypothesis}{Hypotheses}
\newsiamthm{claim}{Claim}

\headers{Robust-Pathwise Control of Elliptic Equations Under Uncertainty}{Q. Sun, Y. Cui, L. Guo, and L. Ju}

\title{Learning Optimal Robust-Pathwise Control of Elliptic Equations with Random Coefficients\thanks{\funding{Qi Sun was partially supported by the Natural Science Foundation of China under grant number 12671528 and 12201465. Ling Guo was partially supported by the Natural Science Foundation of China under grant number 92270115 and 12071301. }}}

\author{%
  Qi Sun$^{\dagger,\ddagger}$\!\!\!
  \and 
  Yanhan Cui\thanks{School of Mathematical Sciences, Tongji University, Shanghai 200092, China (\email{qsun\_irl@tongji.edu.cn}, \email{2433968@tongji.edu.cn}).\protect\\[0.8pt]
  \hspace*{1.33em}$^\ddagger$Key Laboratory of Intelligent Computing and Applications (Ministry of Education), Tongji University, Shanghai 200092, China.\protect\\[0.8pt]
  \hspace*{1.33em}$^\S$Department of Mathematics, Shanghai Normal University, Shanghai 200234, China (\email{lguo@shnu.edu.cn}).\protect\\[0.8pt]
  \hspace*{1.33em}$^\P$Department of Mathematics, University of South Carolina, Columbia, SC, 29208, USA (\email{ju@math.sc.edu}).}
  \and
  Ling Guo$^\S$\!\!\!
  \and 
  Lili Ju$^\P$
}

\usepackage{amsopn}

\usepackage{color}
\usepackage{tikz} 
\usetikzlibrary{arrows,shapes,positioning,automata}
\usetikzlibrary{calc,decorations.markings}
\usetikzlibrary{decorations.pathreplacing}
\usepackage{adjustbox}
\usepackage{mathtools} 
\usepackage{rotating}
\usepackage{multirow,multicol}
\usepackage{colortbl}
\definecolor{kugray5}{RGB}{224,224,224}
\usepackage{makecell}
\newcommand{\ms}{\mkern 2mu} 

\begin{document}

\maketitle

\begin{abstract}
Conventional control approaches to optimization problems under uncertainty entail a practical compromise: deterministic controls offer robustness at the expense of pathwise accuracy while stochastic controls can adapt to individual realizations but suffer from the curse of dimensionality. To achieve both robustness and adaptivity, a neural operator-based approach is proposed in this paper for distributed elliptic optimal control problems with high-dimensional random inputs. A data-driven training algorithm is first established to learn the mapping from coefficient realizations directly to their optimal pathwise controls. To bypass the burden of generating training data pairs, a physics-informed training algorithm is then proposed and recast as weakly supervised learning using pseudo label targets produced by standard adjoint-based control updates. Numerical experiments with rough coefficients are conducted to validate the effectiveness and efficiency of our robust-pathwise controls, which outperforms the benchmark robust deterministic control on unseen coefficient realizations owing to the generalization capability and fast inference of trained models.
\end{abstract}

\begin{keywords}
Distributed elliptic optimal control, rough coefficient, Karhunen--Lo$\grave{\text{e}}$ve expansion, neural operator, data-driven learning, physics-informed learning
\end{keywords}

\begin{MSCcodes}
49M41, 35R60, 68T07, 93E20  	
\end{MSCcodes}

\section{Introduction}

The spatial heterogeneity inherent in material properties, as exemplified by additive manufacturing and reservoir exploitation \cite{smith2024uncertainty,gunzburger2014stochastic}, serves as a principal source of aleatory uncertainty in engineering applications. Often a prompt and robust decision needs to be made across a wide range of realizations, motivating the design of deterministic control strategies for partial differential equations (PDEs) with uncertain coefficients \cite{heinkenschloss2025optimization,lee2017comparison,borzi2010treatment,van2019robust,sun2020distributed,babuska2004galerkin,frauenfelder2005finite,charrier2012strong}. To further enhance adaptability to individual realizations, stochastic control approaches have been formulated to depend explicitly on random parameters \cite{chen2014weighted,benner2016block,ali2017multilevel,dai2025solving,yin2024aonn}, offering the potential for superior performance compared to deterministic counterparts. However, the parameter space induced by the representation of random inputs is typically high-dimensional \cite{ghanem2003stochastic,graham2011quasi}, rendering traditional polynomial-based numerical methods computationally intractable due to the curse of dimensionality. While contemporary advancements in scientific machine learning \cite{karniadakis2021physics,raissi2019physics,li2020fourier,lu2021learning,luo2025efficient} have emerged as promising alternatives \cite{dai2025solving,yin2024aonn,hwang2022solving}, significant impediments persist, particularly concerning excessive training overheads and limited model interpretability.

More specifically, we consider a distributed optimal control problem governed by elliptic PDEs with uncertain coefficients \cite{charrier2012strong} and stochastic control functions \cite{chen2014weighted,yin2024aonn,dai2025solving,benner2016block}. Given a deterministic target function $U(x):D\to\mathbb{R}$, the control objective is formulated as the minimization of a tracking-type cost functional
\begin{equation}
	\mathcal{J}(f) = \mathbb{E}\left[\frac12\int_D|u(x,\omega)-U(x)|^2dx+\frac{\beta}{2}\int_D|f(x,\omega)|^2dx\right]
	\label{Sto-Ctl-Objective-Strong}
\end{equation}
subject to a second-order elliptic boundary value problem with uncertain coefficients $a(x,\omega)$, namely, for almost every (a.e.) $\omega$ in a set of outcomes $\Omega$,
\begingroup
\renewcommand*{\arraystretch}{1.4}
\begin{equation}
\Bigg\{
\begin{array}{cl}
-\nabla\cdot\left(a(x,\omega)\nabla u(x,\omega)\right)=f(x,\omega),\ \ & \textnormal{in}\ D,\\
u(x,\omega)=0,\ \ & \textnormal{on}\ \partial D.
\end{array}
\label{Sto-Ctl-Constraint-Strong}
\end{equation}
\endgroup
Here, and in what follows, $D\subset\mathbb{R}^d$ represents the spatial domain, $\partial D$ its boundary, $\beta>0$ a regularization parameter,  $``\nabla"$ means differentiation with respect to (w.r.t.) the spatial variable $x\in D$ only, and there are positive $a_{\textnormal{min}}$ and $a_{\textnormal{max}}$ such that
\begin{equation}
	a_{\textnormal{min}} \le a(x,\omega) \le a_{\textnormal{max}}, \ \ \textnormal{a.e.}\ (x,\omega) \in D \otimes\Omega,
	\label{Assumption-Coeff}
\end{equation}
Note that the state function $u(x,\omega)\in L^2(\Omega;H_0^1(D))$ in \eqref{Sto-Ctl-Objective-Strong} is uniquely determined by the stochastic control function $f(x,\omega)\in L^2(\Omega; L^2(D))$ via equation \eqref{Sto-Ctl-Constraint-Strong}, hence the objective functional is expressed in terms of the control function for brevity.

\begin{figure}[t!]
\begin{adjustbox}{max totalsize={\textwidth}{\textheight},center}
\begin{tikzpicture}[shorten >=1pt,auto,node distance=2.5cm,thick,main node/.style={rectangle,draw,font=\footnotesize},decoration={brace,mirror,amplitude=7},dimension node/.style={draw,dashed,fill=lightgray,font=\footnotesize},decoration={brace,mirror,amplitude=7}]

\node[main node] (1) {\parbox{2.33cm}{\centering PDE-constrained \\ optimal control \\ under uncertainty}};
\node[draw=none,fill=none] (2) [right = 0.1cm of 1] {};
\node[main node] (3) [above = .9cm of 2] {\parbox{1.27cm}{\centering stochastic \\ control}};
\node[main node, xshift=0.22cm] (4) [below = .9cm of 2] {\parbox{1.7cm}{\centering deterministic \\ control}};
\node[dimension node] (5) [above right = 0.14cm of 3] {\parbox{1.35cm}{\centering operator \\ learning}};
\node[dimension node] (15) [below right = 0.14cm of 3] {\parbox{1.35cm}{\centering function \\ learning}};
\node[main node] (16) [right = 0.63cm of 15] {\parbox[c][0.2cm]{3.61cm}{\centering  robust stochastic control}};
\node[main node, xshift=1cm] (11) [above right = 0.4cm of 4] {\parbox[c][0.25cm]{3.61cm}{\centering mean-based control}};
\node[main node] (12) [right = 1.28cm of 4] {\parbox[c][0.25cm]{3.61cm}{\centering robust deterministic control }};
\node[main node, xshift=1cm] (14) [below right = 0.4cm of 4] {\parbox[c][0.25cm]{3.61cm}{\centering  robust average control}};

\path[draw,->,>=stealth'] (1.north) -- ++(0,.84cm) -> (3.west);
\path[draw,->,>=stealth'] (1.south) -- ++(0,-.84cm) -> (4.west);
\path[every node/.style={font=\sffamily\small,sloped},->,>=stealth']
    (3) edge [above] (5.west)
    (3) edge [below] (5.west)
    (4.north) edge (11.west)
    (4) edge (12.west)
    (4.south) edge (14.west);
\path[every node/.style={font=\sffamily\small,sloped},->,>=stealth']
	(3) edge [above] (15.west)
	(3) edge [below] (15.west)
	(15) edge (16);

\node[draw=none,fill=none] (17) [right = 1.5cm of 5] {};
\node[dimension node, xshift=0.1cm] (8) [below = 0.15cm of 17] {\parbox{1.9cm}{\centering unsupervised}};
\node[main node] (7) [above = 0.3cm of 17, xshift = 0.22cm] {\parbox{2.18cm}{\centering pathwise control}};
\node[main node] (16) [right = 0.36cm of 8] {\parbox{3.17cm}{\centering physics-informed \\ robust-pathwise control}};
\node[dimension node] (9) [right = 0.36cm of 7] {\parbox{1.58cm}{\centering supervised}};
\node[main node] (6) [right = 0.36cm of 9] {\parbox{3.1cm}{\centering data-driven \\ robust-pathwise control}};

\path[every node/.style={font=\sffamily\small,sloped},->,>=stealth']
	([yshift=0.2cm]5.east) edge (7.west)
	([yshift=-0.2cm]5.east) edge (8.west)
    (7)	edge (9)
    (7)	edge (9)
    (8) edge (16)
    (9) edge (6);    	
\end{tikzpicture}
\end{adjustbox}
\vspace{-0.6cm}
\caption{Control strategies for PDE-constrained optimization under high-dimensional uncertainty. }
\label{fig-big-picture}
\vspace{-0.3cm}
\end{figure}

The primary bottleneck in numerically solving this minimization problem is the curse of dimensionality, as the random coefficient often exhibits limited spatial regularity that results in a high truncation number within its Karhunen--Lo$\grave{\text{e}}$ve expansion \cite{ghanem2003stochastic}. To bypass the approximation of a high-dimensional stochastic control function, various deterministic control strategies have been proposed in the literature \cite{lee2017comparison,sun2020distributed}, as outlined in \autoref{fig-big-picture} and discussed further in section~\ref{Sec-Ctrl-Overview}. Owing to its robustness against variations in the coefficient field, the robust deterministic control \cite{borzi2010treatment,borzi2010multigrid,lee2017comparison} is widely employed in practice and frequently serves as a golden benchmark. However, deterministic control strategies cannot adapt to individual realizations, which would compromise the overall tracking performance. Alternatively, the pathwise control \cite{ali2017multilevel} provides satisfactory accuracy by decoupling the stochastic optimal control problem into independent trajectories, but necessitates an iterative online solving process for every unseen coefficient realization.

To combine the strengths of both robustness and adaptivity, this work proposes a neural operator-based control strategy that learns the solution mapping from individual coefficient realizations directly to their optimal pathwise control functions. A key observation is that the trained network parameters are robust against variations in the random coefficient field, whereas the output control prediction adapts to each input coefficient realization even under high-dimensional stochastic parametrization. Therefore, this paradigm is termed robust-pathwise control in contrast to established control approaches in the literature \cite{lee2017comparison,ali2017multilevel,hou2011finite,gunzburger2014stochastic}, as illustrated by the schematic in \autoref{fig-big-picture}. Rather than focusing on the design of neural operators \cite{lu2021learning,li2020fourier,jia2025novel,azizzadenesheli2024neural}, this work explores the training algorithm of the control surrogate to facilitate a seamless transition from supervised (data-driven) to unsupervised (physics-informed) learning regimes. The effectiveness and efficiency of our proposed algorithms are numerically validated against the widely used robust deterministic control \cite{lee2017comparison,sun2020distributed,heinkenschloss2025optimization}, achieving superior state tracking performance alongside real-time inference.

The remainder of this paper is organized as follows. Section~\ref{Sec-Preliminaries} provides the representation of random coefficients utilizing the Karhunen--Lo$\grave{\text{e}}$ve expansion, followed by a comparative review of both deterministic and stochastic control strategies. The solution mapping/operator for the robust stochastic control problem is then introduced, which motivates the employment of neural surrogate model. Section~\ref{Sec-RpCtrl-data-driven} and Section~\ref{Sec-RpCtrl-physics-informed} illustrate the supervised and unsupervised training of our proposed robust-pathwise control model, followed by computational experiments in Section~\ref{Sec-Num-Exps} that validate the efficiency and effectiveness of our approaches. Finally, some concluding remarks and future directions are reported in Section 6.


\section{Preliminaries and related work} \label{Sec-Preliminaries}

\subsection{Representation of uncertain coefficients} 

It is commonly assumed that the available information regarding $g(x,\omega)=\log( a(x,\omega) )$ is limited to its mean
\begin{equation*}
	E_g(x) = \int_\Omega g(x,\omega)\,dP(\omega)= 0
\end{equation*} 
and covariance
\begin{equation}
	\textnormal{Cov}_g(x,y) = \int_\Omega \big( g(x,\omega)-E_g(x) \big) \big( g(y,\omega)-E_g(y) \big) dP(\omega) = \sigma^2 e^{-\frac{\lVert x-y\rVert}{\ell}},
	\label{Sto-Coeff-Cov}
\end{equation}
in which the variance $\sigma^2>0$ indicates the magnitude of fluctuations, the correlation length $\ell>0$ indicates the frequency of oscillation, and $\lVert\cdot\rVert$ denotes a distance metric defined for any two points in the spatial domain $\bar{D}$.

To facilitate numerical implementation, the Karhunen--Lo$\grave{\text{e}}$ve expansion \cite{ghanem2003stochastic,frauenfelder2005finite} is widely used to separate spatial dependencies from stochastic variables, as detailed in the following proposition.

\begin{proposition}\textnormal{\cite{ghanem2003stochastic}}
The covariance operator of a stochastic function $g(x,\omega)\in L^2(\Omega;L^2(D))$, namely,
\begin{equation*}
	\mathcal{C}_g: L^2(D) \to L^2(D), \qquad (\mathcal{C}_g\varphi)(x) = \int_D \textnormal{Cov}_g(x,y)\varphi(y)\,dy,
\end{equation*}
is a symmetric, non-negative and compact integral operator which admits a countable sequence $\{(\lambda_m,\varphi_m)\}_{m\geq 1}$ of eigenpairs (arranged in decreasing order) satisfying
\begin{equation}
	\int_{D\times D} \textnormal{Cov}_g(x,y)\varphi_m(y)v(x)\,dydx = \lambda_m\int_D\varphi_m(x)v(x)\,dx \ \ \ \textnormal{for any}\ v\in L^2(D).
	\label{KLE-Eigenprob}
\end{equation}

Moreover, there exists a sequence of random variables $\{\theta_m(\omega)\}_{m\geq 1}$ such that 
\begin{equation*}
	\mathbb{E}[\theta_m(\omega)]=0 \ \ \ \textnormal{and} \ \ \ \mathbb{E}[\theta_m(\omega)\theta_n(\omega)]=\delta_{mn} \ \ \  \textnormal{for any}\ m,n\geq 1,
\end{equation*}
and $g(x,\omega)$ can be expanded in a Karhunen--Lo$\grave{\text{e}}$ve expansion:
\begin{equation}
	g(x,\omega) = \mathbb{E}[g] + \sum_{m\geq 1}\sqrt{\lambda_m}\varphi_m(x)\theta_m(\omega).
	\label{KLE-Log-Coeff}
\end{equation}
Note that, when the series is truncated after a finite number, $M\in\mathbb{Z}_+$, of terms, the resulting approximation $g_M(x,\omega)$ minimizes the error
\begin{equation*}
	\lVert g(x,\omega) - g_M(x,\omega) \rVert_{L^2(\Omega;L^2(D))}^2 = \sum_{m>M}\lambda_m.
\end{equation*}
\end{proposition}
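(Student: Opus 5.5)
The plan is the standard Mercer/Hilbert--Schmidt route. First, the kernel $\textnormal{Cov}_g$ lies in $L^2(D\times D)$. Indeed, by Cauchy--Schwarz in $\Omega$ and Fubini,
\[
\int_{D\times D}|\textnormal{Cov}_g(x,y)|^2\,dx\,dy \le \lVert g-E_g\rVert_{L^2(\Omega;L^2(D))}^4<\infty .
\]
Hence $\mathcal{C}_g$ is Hilbert--Schmidt, and therefore compact. Symmetry follows from $\textnormal{Cov}_g(x,y)=\textnormal{Cov}_g(y,x)$. Non-negativity follows by writing, for $\varphi\in L^2(D)$,
\[
(\mathcal{C}_g\varphi,\varphi)_{L^2(D)}=\int_\Omega\Big(\int_D (g(x,\omega)-E_g(x))\varphi(x)\,dx\Big)^2 dP(\omega)\ge 0,
\]
where exchanging integrals is justified by Fubini and the integrability above. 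The spectral theorem for compact self-adjoint operators then yields an orthonormal system $\{\varphi_m\}$ with eigenvalues $\lambda_m\ge 0$ decreasing to $0$. Together with an orthonormal basis of $\ker\mathcal{C}_g$, this system forms an orthonormal basis of $L^2(D)$. Testing the eigen-equation against $v\in L^2(D)$ gives \eqref{KLE-Eigenprob} directly.

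Next, for each $\omega$, expand $g(\cdot,\omega)-E_g$ in this basis. For $\lambda_m>0$, define
\[
\theta_m(\omega)=\lambda_m^{-1/2}\int_D (g(x,\omega)-E_g(x))\varphi_m(x)\,dx .
\]
Then $\mathbb{E}[\theta_m]=0$ by Fubini, since $g-E_g$ has zero mean. The same computation as in the non-negativity step gives
\[
\mathbb{E}[\theta_m\theta_n]=(\lambda_m\lambda_n)^{-1/2}(\mathcal{C}_g\varphi_m,\varphi_n)=\delta_{mn}.
\]
For $\psi$ in $\ker\mathcal{C}_g$, the coefficient $\int_D (g-E_g)\psi\,dx$ has zero second moment, so it vanishes $P$-a.s. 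Therefore $g-E_g=\sum_m\sqrt{\lambda_m}\varphi_m\theta_m$ in $L^2(D)$ for a.e. $\omega$, which is \eqref{KLE-Log-Coeff}. If $\lambda_m=0$ occurs for some indexed $m$, simply pick $\theta_m$ independent with unit variance; it does not affect the sum.

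For the error identity, Parseval in $L^2(D)$ for a.e. $\omega$ gives
\[
\lVert g-g_M\rVert^2_{L^2(D)}=\sum_{m>M}\lambda_m\theta_m^2 .
\]
Taking expectations and using monotone convergence yields $\sum_{m>M}\lambda_m$. Convergence of this series follows from $\sum_m\lambda_m=\lVert g-E_g\rVert^2_{L^2(\Omega;L^2(D))}<\infty$, the trace identity obtained from the same computation with $M=0$. This also shows that the series converges in $L^2(\Omega;L^2(D))$.

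For the minimization claim, consider any $M$-term separable approximation $\sum_{m\le M}\psi_m(x)\eta_m(\omega)$. The best $L^2(\Omega;L^2(D))$ approximation with spatial range in an $M$-dimensional subspace $V$ is the orthogonal projection, with error $\mathbb{E}\lVert g-E_g\rVert^2-\textnormal{tr}(P_V\mathcal{C}_g)$. By the Ky Fan maximum principle, $\textnormal{tr}(P_V\mathcal{C}_g)\le\sum_{m\le M}\lambda_m$, with equality for $V=\textnormal{span}\{\varphi_1,\dots,\varphi_M\}$.

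The main obstacle is bookkeeping rather than depth: justifying the Fubini interchanges, and checking measurability of $\theta_m$ via joint measurability of $g$. I would handle both by working throughout in the Bochner space $L^2(\Omega;L^2(D))\cong L^2(\Omega\times D)$. The Ky Fan step I would cite as standard.
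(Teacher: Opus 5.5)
Your proof is correct. The paper gives no proof of its own, since the proposition is quoted from \cite{ghanem2003stochastic}. The classical derivation there differs from yours in two places.
First, it treats $\textnormal{Cov}_g$ as a continuous kernel on the compact set $\bar D$ and invokes Mercer's theorem to write $\textnormal{Cov}_g(x,y)=\sum_m\lambda_m\varphi_m(x)\varphi_m(y)$.
Second, it obtains the error-minimizing property by writing the mean-square truncation error for an arbitrary orthonormal basis and imposing stationarity under normalization constraints with Lagrange multipliers, which reproduces the Fredholm eigenproblem \eqref{KLE-Eigenprob}.

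Your Hilbert--Schmidt route uses only the hypothesis actually stated, $g\in L^2(\Omega;L^2(D))$, with no continuity of the kernel. The price is that you get convergence in $L^2(D)$ for a.e.\ $\omega$ and in $L^2(\Omega;L^2(D))$, whereas Mercer also gives $\sup_x\mathbb{E}|g-g_M|^2\to 0$. That stronger convergence is available here anyway, because the exponential kernel \eqref{Sto-Coeff-Cov} is continuous.

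Your projection-plus-Ky Fan step proves global minimality over all $M$-term separable approximations. The stationarity computation, by contrast, only identifies eigenfunctions as critical points and needs a further argument to show that the top $M$ are optimal. In effect you are proving the Schmidt (Eckart--Young) theorem for the Hilbert--Schmidt operator $\eta\mapsto\mathbb{E}[(g-E_g)\eta]$ from $L^2(\Omega)$ to $L^2(D)$, whose singular value decomposition is exactly the Karhunen--Lo\`eve expansion.

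One small caveat: adjoining an independent unit-variance $\theta_m$ for an index with $\lambda_m=0$ may require enlarging the probability space. It is cleaner to index only the positive eigenvalues. For the exponential kernel this does not arise, since $\mathcal{C}_g$ is then injective.
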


The truncation number is usually determined through a cumulative energy criterion, which selects the smallest integer $M$ such that the energy ratio
\begin{equation}\label{KLE-Eigenprob-Ratio}
	\rho_M={\sum_{m=1}^M \lambda_m}/{\sum_{m=1}^\infty \lambda_m}	
\end{equation}
exceeds a user-defined tolerance value. Notably, for covariance functions \eqref{Sto-Coeff-Cov} with low regularity \cite{frauenfelder2005finite}, the slow decay of eigenvalues necessitates a large number of retained modes to meet the energy criterion, leading to a high-dimensional parameter space susceptible to the curse of dimensionality.

\subsection{Overview of established control strategies}\label{Sec-Ctrl-Overview}

Various formulations for elliptic PDE-constrained optimization problems with uncertain coefficients have been proposed in the literature \cite{lee2017comparison,ali2017multilevel,heinkenschloss2025optimization,luo2025efficient}, which can be broadly categorized according to the use of deterministic or stochastic control functions as depicted in \autoref{fig-big-picture}.

In practice, deterministic control functions are widely adopted in engineering applications due to their robustness against random inputs and ease of implementation, with differences arising in their cost functionals or constraint equations. More specifically, three representative cases are compared and presented as follows:
\begin{itemize}
\item[(a)] Mean-based control \cite{borzi2010multigrid,borzi2010treatment} reduces to a deterministic optimization problem by substituting uncertain coefficients with the expected value $\bar{a}(x)=\mathbb{E}[ a(x,\omega) ]$ 
\vspace{-.25em}
\begin{equation*}
\begin{array}{cl}
\mathop{\mathrm{minimize}}\limits_{\bar{f}(x)} & \displaystyle \bar{\mathcal{J}} ( \bar{f} ) = \frac12 \|\bar{u}(x) - U(x) \|^2_{L^2(D)} + \frac{\beta}{2} \|\bar{f}(x) \|^2_{L^2(D)} \\[0.8em]
\textnormal{subject to} & 
\left\{
\begin{array}{cl}
\displaystyle - \nabla \cdot \left( \bar{a}(x)\nabla \bar{u}(x)\right) = \bar{f}(x),\ \ & \textnormal{in}\ D,\\
\displaystyle \bar{u}(x)=0,\ \ & \textnormal{on}\ \partial D.
\end{array}\right.
\end{array}
\end{equation*}
However, such a formulation overlooks uncertainties in the coefficients, yielding a control function that often fails to respond to input variations \cite{lee2017comparison}.
\item[(b)] Robust deterministic control \cite{borzi2010treatment,hou2011finite,lee2017comparison,van2019robust,sun2020distributed,heinkenschloss2025optimization,gunzburger2011optimal} couples different coefficient realizations through a unified cost functional 
\vspace{-.25em}
\begin{equation*}
\begin{array}{cl}
\mathop{\mathrm{minimize}}\limits_{\hat{f}(x)} & \displaystyle \hat{\mathcal{J}} ( \hat{f} ) = \mathbb{E} \left[ \frac12 \| \hat{u}(x,\omega)-U(x)\|^2_{L^2(D)} \right] + \frac{\beta}{2} \|\hat{f}(x)\|^2_{L^2(D)} \\[0.8em]
\textnormal{subject to} & 
\left\{
\begin{array}{cl}
\displaystyle - \nabla \cdot \left( a(x,\omega)\nabla \hat{u}(x,\omega)\right) = \hat{f}(x),\ \ & \textnormal{in}\ D \otimes\Omega,\\
\displaystyle \hat{u}(x,\omega)=0,\ \ & \textnormal{on}\ \partial D \otimes \Omega.
\end{array}\right.
\end{array}
\end{equation*}
The integration of numerous coefficient realizations enables the control function to achieve robustness against random fluctuations, thereby making it an applicable benchmark in practice \cite{lee2017comparison}.
\item[(c)] Robust average control \cite{lee2017comparison,gunzburger2008error} prioritizes the expectation tracking objective over penalizing large deviations arising from particular realizations, with the loss term related to that of (b) through the Jensen's inequality
\vspace{-.25em}
\begin{equation*}
\begin{array}{cl}
\mathop{\mathrm{minimize}}\limits_{\check{f}(x)} & \displaystyle \check{\mathcal{J}} ( \check{f} ) = \frac12 \left\| \mathbb{E}[\check{u}(x,\omega)]-U(x) \right\|^2_{L^2(D)} + \frac{\beta}{2} \|\check{f}(x)\|^2_{L^2(D)} \\[0.8em]
\textnormal{subject to} & 
\left\{
\begin{array}{cl}
\displaystyle - \nabla \cdot \left( a(x,\omega)\nabla \check{u}(x,\omega)\right) = \check{f}(x),\ \ & \textnormal{in}\ D \otimes\Omega,\\
\displaystyle \check{u}(x,\omega)=0,\ \ & \textnormal{on}\ \partial D \otimes \Omega.
\end{array}\right.
\end{array}
\end{equation*}
\end{itemize}
Owing to its practical feasibility and robustness against input uncertainties \cite{lee2017comparison,heinkenschloss2025optimization}, the robust deterministic control approach (b) is extensively applied to address PDE-constrained optimization problems under uncertainty, making it an appropriate baseline for comparison with our proposed control surrogates.

To overcome the lack of adaptability in deterministic control strategies \cite{lee2017comparison}, the stochastic control function is formulated to depend explicitly on random parameters \cite{chen2014weighted,ali2017multilevel}, among which two notable approaches are as follows:
\begin{itemize}
\item[(d)] Robust stochastic control \cite{borzi2010multigrid,borzi2010treatment,hou2011finite} facilitates adjustments to individual realizations, thereby offering the potential to achieve superior performance
\vspace{-.25em}
\begin{equation*}
\begin{array}{cl}
\mathop{\mathrm{minimize}}\limits_{f(x,\omega)} & \displaystyle \mathcal{J} ( f ) = \mathbb{E} \left[ \frac12 \| u(x,\omega) - U(x) \|^2_{L^2(D)} + \frac{\beta}{2} \| f(x,\omega) \|^2_{L^2(D)} \right] \\[0.8em]
\textnormal{subject to} & 
\left\{
\begin{array}{cl}
\displaystyle - \nabla \cdot \left( a(x,\omega)\nabla u(x,\omega)\right) = f(x,\omega),\ \ & \textnormal{in}\ D \otimes\Omega,\\
\displaystyle u(x,\omega)=0,\ \ & \textnormal{on}\ \partial D \otimes \Omega.
\end{array}\right.
\end{array}
\end{equation*}
Stochastic Galerkin or collocation methods are standard for numerical simulation but suffer from the curse of dimensionality \cite{gunzburger2014stochastic}, motivating the development of neural network-based function learning approaches \cite{dai2025solving,yin2024aonn,song2024admm}.
\item[(e)] Pathwise control \cite{ali2017multilevel} applies the Monte Carlo method to mitigate the curse of dimensionality, decomposing (d) into independent realizations (hereafter, the subscript $\omega$ is used to indicate a specific realization of a stochastic field)
\vspace{-.25em}
\begin{equation*}
\begin{array}{cl}
\mathop{\mathrm{minimize}}\limits_{f_\omega(x)} & \displaystyle \mathcal{J}_\omega ( f_\omega ) = \frac12 \| u_\omega(x) - U(x) \|^2_{L^2(D)} + \frac{\beta}{2} \| f_\omega(x) \|^2_{L^2(D)} \\[0.8em]
\textnormal{subject to} & 
\left\{
\begin{array}{cl}
\displaystyle - \nabla \cdot \left( a_\omega(x)\nabla u_\omega(x)\right) = f_\omega(x),\ \ & \textnormal{in}\ D,\\
\displaystyle u_\omega(x)=0,\ \ & \textnormal{on}\ \partial D.
\end{array}\right.
\end{array}
\end{equation*}
However, solutions tailored to specific realizations fail to generalize to unseen instances, while applying their mean value reduces to deterministic control. 
\end{itemize}

\subsection{Feedback control via observable inputs} \label{Sec-Notations}

As discussed in the previous section, the robust stochastic control reveals itself to be a compelling strategy owing to its pathwise adaptivity to individual coefficient realizations \cite{hou2011finite,lee2017comparison,gunzburger2008error}. However, the random parameter is typically a latent variable that eludes direct measurement in real-world applications \cite{christie2001tenth}, rendering the stochastic control function practically infeasible for the governing system \eqref{Sto-Ctl-Constraint-Strong}. This limitation underscores the need for a feedback control driven by observable inputs, the mechanism of which is elucidated by the introduction of solution mapping presented below.

To be specific, let $H^r(D)$ ($r\in\mathbb{Z}_+$) denote the standard Sobolev space equipped with the inner product $(\cdot,\cdot)_{H^r(D)}$ and the induced norm $\| \cdot\|_{H^r(D)}$ \cite{evans2022partial,brenner2007mathematical,charrier2012strong}, and let $H^r_0(D)=\{v\in H^r(D): v=0 \text{ on } \partial D\}$. For a $\sigma$-finite probability space $(\Omega,\mathcal{F},\mu)$ \cite{frauenfelder2005finite} with a $\sigma$-algebra of events $\mathcal{F}$ and a probability measure $\mu : \mathcal{F}\rightarrow[0,1]$, we define the stochastic Sobolev space $L^2(\Omega;H_0^r(D))$ \cite{babuska2004galerkin,frauenfelder2005finite,charrier2012strong} with the inner product
\begin{equation*}
	\left[u,v\right]_{L^2(\Omega;H_0^r(D))}=\mathbb{E} \big[ (u,v)_{H_0^r(D)} \big]=\int_\Omega(u,v)_{H_0^r(D)}d\mu(\omega)
\end{equation*}
and the induced norm 
\begin{equation*}
	\lVert u\rVert ^2_{L^2(\Omega;H_0^r(D))}=\mathbb{E}\big[\lVert u\rVert^2_{H_0^r(D)}\big]=\int_\Omega\lVert u\rVert^2_{H_0^r(D)}d\mu(\omega).
\end{equation*}
Note that the Hilbert space $L^2(\Omega;H_0^r(D))$ is isomorphic to the tensor product space $H_0^r(D)\otimes L^2(\Omega)$ \cite{babuska2004galerkin}, and the function space $L^2(\Omega;L^2(D))$ is constructed analogously.

Then, by defining two bilinear forms 
\begin{equation*}
	b[u,v]=\mathbb{E}\left[\int_D a \nabla u\cdot\nabla v\, dx\right]\ \ \textnormal{and}\ \ [f,v]=\mathbb{E}\left[\int_Df v\, dx\right]
\end{equation*}
for any $u,v\in L^2(\Omega;H_0^1(D))$ and $f\in L^2(\Omega;L^2(D))$, the weak formulation of \eqref{Sto-Ctl-Constraint-Strong} is 
\begin{equation}
	b[u,v]=[f,v]\ \ \ \textnormal{for any}\ v\in L^2(\Omega;H_0^1(D)),
	\label{Sto-Ctl-Constraint-Weak}
\end{equation}
where the existence and uniqueness of its weak solution are established below.

\begin{lemma} \label{Lemma-weakBVP-Existence}
Under assumption \eqref{Assumption-Coeff}, the elliptic equation \eqref{Sto-Ctl-Constraint-Weak} admits a unique weak solution $u(x,\omega)\in L^2(\Omega;H_0^1(D))$ for any given $f(x,\omega) \in L^2(\Omega;L^2(D))$ \textnormal{\cite{charrier2012strong,hou2011finite}}. 
\end{lemma}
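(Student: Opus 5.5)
The plan is to apply the Lax--Milgram theorem on the Hilbert space $V:=L^2(\Omega;H_0^1(D))$, equipped with the inner product $[u,v]_{L^2(\Omega;H_0^1(D))}=\mathbb{E}[(\nabla u,\nabla v)_{L^2(D)}]$. By the Poincaré inequality on the bounded domain $D$, this inner product induces a norm equivalent to the full $H^1$ norm pathwise, and hence also after taking expectations. Completeness of $V$ follows from its identification with the tensor product $H_0^1(D)\otimes L^2(\Omega)$ noted above, or directly from the standard completeness of Bochner spaces. We must then verify three things for the forms appearing in \eqref{Sto-Ctl-Constraint-Weak}: boundedness of $b[\cdot,\cdot]$ on $V\times V$, coercivity of $b$ on $V$, and continuity of the linear functional $v\mapsto[f,v]$ on $V$.

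For boundedness, we will use the upper bound in \eqref{Assumption-Coeff} and apply Cauchy--Schwarz twice, first in $x$ and then in $\omega$:
\begin{equation*}
|b[u,v]|\le a_{\max}\,\mathbb{E}\big[\|\nabla u\|_{L^2(D)}\|\nabla v\|_{L^2(D)}\big]\le a_{\max}\|u\|_{V}\|v\|_{V}.
\end{equation*}
Coercivity follows from the lower bound in \eqref{Assumption-Coeff}:
\begin{equation*}
b[v,v]=\mathbb{E}\Big[\int_D a|\nabla v|^2dx\Big]\ge a_{\min}\|v\|_V^2.
\end{equation*}
For the right-hand side, we combine Cauchy--Schwarz with the Poincaré constant $C_P$ of $D$:
\begin{equation*}
|[f,v]|\le \mathbb{E}\big[\|f\|_{L^2(D)}\|v\|_{L^2(D)}\big]\le C_P\|f\|_{L^2(\Omega;L^2(D))}\|v\|_V.
\end{equation*}
Lax--Milgram then yields a unique $u\in V$ satisfying \eqref{Sto-Ctl-Constraint-Weak}. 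It also gives the a priori bound $\|u\|_V\le (C_P/a_{\min})\|f\|_{L^2(\Omega;L^2(D))}$.

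The main obstacle is measure-theoretic rather than analytic. We need $b[u,v]$ to be well defined, which requires $\omega\mapsto\int_D a\nabla u\cdot\nabla v\,dx$ to be measurable. This holds once $a$ is jointly measurable on $D\times\Omega$, which is implicit in \eqref{Assumption-Coeff} being stated almost everywhere on $D\otimes\Omega$. A second concern is that the solution of the tensorized problem should coincide with the pathwise solutions $u_\omega$ of \eqref{Sto-Ctl-Constraint-Strong}.

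As a consistency remark, we can check this second point by testing with $v=\chi_A(\omega)w(x)$ for $A\in\mathcal{F}$ and $w$ in a countable dense subset of $H_0^1(D)$. This shows that $u(\cdot,\omega)$ solves the deterministic weak problem for a.e.\ $\omega$, and the deterministic Lax--Milgram theorem gives uniqueness of that pathwise solution. The remark is not needed for the lemma itself, which follows from the three estimates above.
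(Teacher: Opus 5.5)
Your proposal is correct and takes essentially the same approach as the paper. The paper states the lemma without proof and cites \cite{charrier2012strong,hou2011finite}, but later invokes exactly Lax--Milgram on $L^2(\Omega;H_0^1(D))$ together with the bounds \eqref{Assumption-Coeff} and the Poincar\'e inequality to obtain \eqref{Priori-Estimate}; your bound $C_P/a_{\textnormal{min}}$ is that estimate, and your measurability and pathwise-consistency remarks are sound extras the paper does not address.
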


Accordingly, the robust stochastic control problem can be reformulated as
\begin{equation}
	\textnormal{seek}\ f\in L^2(\Omega;L^2(D))\ \textnormal{such that}\ \eqref{Sto-Ctl-Objective-Strong}\ \textnormal{is minimized subject to}\ \eqref{Sto-Ctl-Constraint-Weak},
	\label{Sto-Ctl-Weak}
\end{equation}
from which the optimality system is derived via the Lagrange multiplier rule \cite{hou2011finite}.

\begin{lemma} \label{Lemma-Sto-Ctrl-Existence}
Under assumption \eqref{Assumption-Coeff}, the distributed optimization problem \eqref{Sto-Ctl-Weak} admits a unique optimal control $f^*(x,\omega) \in L^2(\Omega;L^2(D))$ satisfying 
\begin{equation} \label{Sto-Ctl-Optimality}
    \begin{array}{c l}
        b[u^*,v] = [f^*,v], & \ \text{for any } v \in L^2(\Omega;H_0^1(D)), \\[1ex]
        b[\xi^*,\eta] = [u^*-U,\eta], & \ \text{for any } \eta \in L^2(\Omega;H_0^1(D)), \\[1ex]
        [\beta f^* + \xi^*, t-f^*] = 0, & \ \text{for any } t \in L^2(\Omega;L^2(D)),
    \end{array}
\end{equation}
where $u^*(x,\omega) \in L^2(\Omega;H_0^1(D))$ and $\xi^*(x,\omega) \in L^2(\Omega;H_0^1(D))$ denote the corresponding state and adjoint state, respectively \textnormal{\cite{hou2011finite}}.
\end{lemma}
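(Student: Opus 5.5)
We treat \eqref{Sto-Ctl-Weak} as an unconstrained minimization over the Hilbert space $L^2(\Omega;L^2(D))$, after eliminating the state through the solution operator of Lemma~\ref{Lemma-weakBVP-Existence}. Let $S: L^2(\Omega;L^2(D)) \to L^2(\Omega;H_0^1(D))$ send $f$ to the unique weak solution $u=Sf$ of \eqref{Sto-Ctl-Constraint-Weak}. By uniqueness, $S$ is linear.

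\textbf{Step 1: $S$ is bounded.} Testing \eqref{Sto-Ctl-Constraint-Weak} with $v=u$ and using $a\ge a_{\min}$ from \eqref{Assumption-Coeff} gives
\begin{equation*}
a_{\min}\,\mathbb{E}\|\nabla u\|^2_{L^2(D)} \le [f,u].
\end{equation*}
Cauchy--Schwarz and the Poincar\'e inequality $\|v\|_{L^2(D)}\le C_P\|\nabla v\|_{L^2(D)}$, applied pathwise and then integrated over $\Omega$, yield $\|Sf\|_{L^2(\Omega;H_0^1(D))}\le C\|f\|_{L^2(\Omega;L^2(D))}$. Hence $S$ is also bounded into $L^2(\Omega;L^2(D))$.

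\textbf{Step 2: existence and uniqueness of $f^*$.} The reduced functional is
\begin{equation*}
\mathcal{J}(f)=\tfrac12\|Sf-U\|^2+\tfrac{\beta}{2}\|f\|^2,
\end{equation*}
with both norms taken in $L^2(\Omega;L^2(D))$; the deterministic $U$ is viewed as a constant-in-$\omega$ element of this space. $\mathcal{J}$ is continuous and coercive, since $\mathcal{J}(f)\ge\frac{\beta}{2}\|f\|^2$. It is strictly convex, since $f\mapsto\frac{\beta}{2}\|f\|^2$ is $\beta$-strongly convex and $f\mapsto\frac12\|Sf-U\|^2$ is convex because $S$ is affine-linear. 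A minimizing sequence is therefore bounded and has a weakly convergent subsequence. A continuous convex functional is weakly lower semicontinuous, so the weak limit is a minimizer. Strict convexity gives uniqueness.

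\textbf{Step 3: optimality system.} Write $u^*=Sf^*$. Define the adjoint state $\xi^*\in L^2(\Omega;H_0^1(D))$ as the unique solution of
\begin{equation*}
b[\xi^*,\eta]=[u^*-U,\eta] \quad \text{for all } \eta\in L^2(\Omega;H_0^1(D)).
\end{equation*}
This is again Lemma~\ref{Lemma-weakBVP-Existence}: $b$ is symmetric, and the right-hand side $u^*-U$ lies in $L^2(\Omega;L^2(D))$. Differentiating $\mathcal{J}$ at $f^*$ in a direction $t-f^*$ gives
\begin{equation*}
\mathcal{J}'(f^*)(t-f^*)=[u^*-U,\,S(t-f^*)]+\beta[f^*,\,t-f^*].
\end{equation*}
Now take $\eta=S(t-f^*)$ in the adjoint equation and use the symmetry of $b$:
\begin{equation*}
[u^*-U,\,S(t-f^*)]=b[\xi^*,S(t-f^*)]=b[S(t-f^*),\xi^*]=[t-f^*,\xi^*],
\end{equation*}
where the last equality is the state equation for $S(t-f^*)$ tested with $v=\xi^*$. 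Since $f^*$ is an unconstrained minimizer, $\mathcal{J}'(f^*)=0$, which gives the third relation of \eqref{Sto-Ctl-Optimality}. The first two relations hold by construction.

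\textbf{Expected main obstacle.} The only delicate point is justifying that $S$ maps into a genuine Bochner space $L^2(\Omega;H_0^1(D))$. This requires strong measurability in $\omega$, which is what the solution space in Lemma~\ref{Lemma-weakBVP-Existence} asserts, so we simply invoke that lemma. Once this is granted, everything above is standard Hilbert-space convex optimization.
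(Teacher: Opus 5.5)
Your argument is correct. The paper gives no proof of its own: it defers to \cite{hou2011finite} and says only that the optimality system follows ``via the Lagrange multiplier rule.''

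Your route differs in how the first-order conditions are obtained. You eliminate the state through the bounded linear solution map $S$. You then apply the direct method to the reduced, strongly convex functional $\frac12\|Sf-U\|^2+\frac{\beta}{2}\|f\|^2$ on all of $L^2(\Omega;L^2(D))$. Finally, a single adjoint-plus-symmetry computation identifies $\mathcal{J}'(f^*)$ with $[\beta f^*+\xi^*,\,\cdot\,]$.

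The Lagrange-multiplier route keeps $(u,f)$ as independent variables. Before an abstract multiplier theorem delivers $\xi^*$, one must check a constraint qualification: the linearized constraint $(u,f)\mapsto b[u,\cdot\,]-[f,\cdot\,]$ must map onto the dual of $L^2(\Omega;H_0^1(D))$. Here that is immediate from Lax--Milgram.

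Your version is more elementary and self-contained. It also makes plain that, with no control constraints, the third relation simply says $\beta f^*+\xi^*=0$. The Lagrangian formulation, in exchange, carries over more directly to nonlinear state equations. It is also the form the paper itself reuses in Section~\ref{Sec-RpCtrl-physics-informed-algorithm} to compute $\nabla_\theta\mathcal{F}$.

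The ``obstacle'' you flag is not a real one here. In \eqref{Sto-Ctl-Constraint-Weak}, the form $b$ is bounded (by $a_{\textnormal{max}}$) and coercive (by $a_{\textnormal{min}}$ and Poincar\'e) on the Hilbert space $L^2(\Omega;H_0^1(D))$ itself. Lax--Milgram therefore produces $Sf$ directly in that Bochner space. Measurability only needs attention if one solves pathwise and then assembles the solutions.
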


In other words, Lemma \ref{Lemma-Sto-Ctrl-Existence} presents a nonlinear solution operator that maps the random coefficient field to its optimal stochastic control
\begin{equation}\label{Solution-Mapping-StoCtrl}
	\mathcal{S}: L^\infty(\Omega;L^\infty(D)) \to L^2(\Omega;L^2(D)), \qquad  a(x,\omega) \mapsto f^*(x,\omega).
\end{equation}
Unfortunately, numerically resolving this solution mapping through traditional grid-based methods is computationally prohibitive due to the curse of dimensionality \cite{gunzburger2008error,ghanem2003stochastic}, motivating the employment of neural operators as computational surrogates \cite{azizzadenesheli2024neural}. However, existing literature \cite{wang2021fast,hwang2022solving,luo2025efficient} typically relies on a two-phase paradigm: an offline phase that trains a neural operator \cite{li2020fourier,lu2021learning} to map the coefficient and control fields to their associated system state, namely, $\{a(x,\omega),f(x,\omega)\} \mapsto u(x,\omega)$, followed by an online phase that solves the optimal control problem through a gradient-based algorithm. While accelerating the repeated state and adjoint evaluations \cite{lundqvist2025employing}, it still requires an iterative search to identify the optimal control. In contrast, our methods directly learn the solution mapping \eqref{Solution-Mapping-StoCtrl}, therefore enabling instantaneous inference for any coefficient realization. Accordingly, developing effective and efficient training algorithms has emerged as a critical challenge \cite{song2024admm}, which is the focus of this work.

It is also noteworthy that several recent works \cite{yin2024aonn,dai2025solving,song2024admm,yuan2025adaptive} directly compute the Karush-Kuhn-Tucker system \eqref{Sto-Ctl-Optimality} via a function learning approach \cite{raissi2019physics}. Though effective for approximating high-dimensional functions, the resulting optimal control function depends explicitly on the latent random parameters, which is often impractical to observe in real-world engineering applications \cite{christie2001tenth}. Besides, neural operators have recently been applied to optimization problems governed by dynamical systems \cite{feng2025optimal,lundqvist2025employing}, emerging as an effective and efficient alternative in conjunction with neural network-based approximation methods \cite{li2024neural}.

\section{Data-driven robust-pathwise control}\label{Sec-RpCtrl-data-driven}

This section introduces the data-driven training of our proposed robust-pathwise control (see \autoref{figure-flowchart-RpCtrl-data-driven}). We begin by outlining the generation of a high-quality training dataset utilizing the Monte Carlo finite element method, and subsequently present the detailed training algorithm.

\begin{figure}[t!]
\centering
\includegraphics[width=0.8\textwidth]{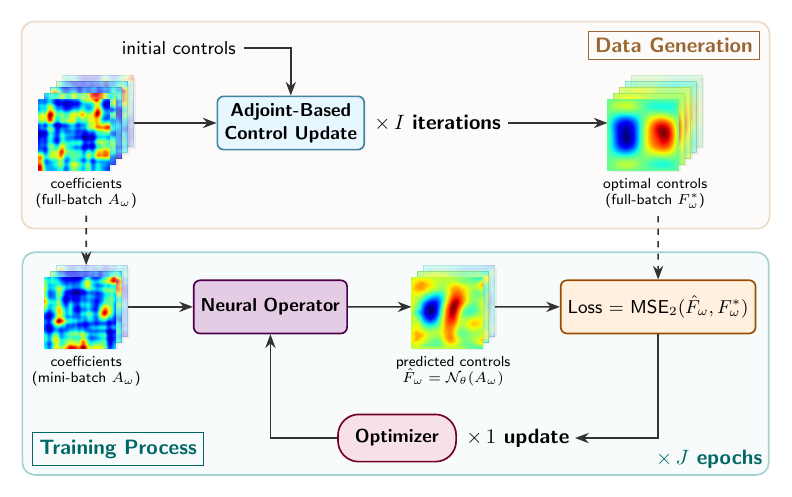}
\vspace{-0.5cm}
\caption{Supervised learning flowchart for our proposed data-driven robust-pathwise control.}
\label{figure-flowchart-RpCtrl-data-driven}
\vspace{-0.3cm}
\end{figure}

\subsection{Monte Carlo finite element discretization} \label{Section-MC-FEM}

To construct a neural operator surrogate for the solution mapping \eqref{Solution-Mapping-StoCtrl}, the Monte Carlo method is adopted to mitigate the curse of dimensionality, leading to the pathwise solution operator 
\begin{equation}\label{Solution-Mapping-StoCtrl-Pathwise}
	\mathcal{S}_\omega: L^\infty(D) \to L^2(D), \qquad a_\omega(x) \mapsto f_\omega^*(x),
\end{equation}
with realizations drawn independently and identically from the probability space. It is also noteworthy that the existence of the pathwise solution operator follows from the standard pathwise optimal control problem \cite{ali2017multilevel,lions1971optimal}.

Next, the finite element method \cite{brenner2007mathematical,hou2011finite} is employed for spatial discretization, as the supervised training of neural operators typically requires finite-dimensional data pairs \cite{winovich2019convpde,li2020fourier,lu2021learning}. Accordingly, the numerical counterpart defines a discrete mapping between finite-dimensional spaces
\begin{equation}\label{Solution-Mapping-StoCtrl-Pathwise-Numerical}
	\mathcal{S}_{h,\omega}: V_h \to U_h, \qquad a_{h,\omega}(x) \mapsto f_{h,\omega}^*(x),
\end{equation}
where $V_h \subset L^\infty(D)$ and $U_h \subset L^2(D)$ indicate the finite element spaces on a uniform triangulation $\mathcal{T}_h$ of domain $D$ with mesh width $h>0$. Here, both the coefficient and control functions are discretized using identical conforming basis functions
\begin{equation*}
	V_h = U_h = \textnormal{span}\{\phi_1(x),\cdots,\phi_N(x)\},
\end{equation*}
which naturally bypasses any input-output dimensionality mismatch during the construction of neural operators.

For the numerical representation of coefficient realizations, the Galerkin approximation of \eqref{KLE-Eigenprob} is introduced: find $\lambda_m^h\in\mathbb{R}_+$ and $\varphi^h_m(x) \in V_h$ such that
\begin{equation}
	\int_{D\times D} \textnormal{Cov}_g(x,y)\varphi^h_m(y)v_h(x)\,dydx = \lambda_m^h\int_D\varphi_m^h(x)v_h(x)\,dx, \ \ \ \textnormal{for any}\ v_h\in V_h,
	\label{KLE-Generalized-Eigenprob}
\end{equation}
where $\varphi^h_m(x) = \sum_{n=1}^N c_{mn}\phi_n(x)$ with unknowns $c_{mn}\in\mathbb{R}$ to be computed by solving the algebraic generalized eigenvalue problem \cite{babuska2004galerkin,ghanem2003stochastic}. Upon truncating the expansion at $M\in\mathbb{Z}_+$ terms to ensure the retained energy ratio exceeds a prescribed threshold, the finite-dimensional approximation of the coefficient realization is given by 
\begin{equation}
\begin{aligned}
	a_\omega(x) & \approx \textnormal{exp}\bigg(\sum_{m=1}^{M}\sqrt{\lambda_m^h}\varphi_m^h(x)\theta_m(\omega)\bigg) &\!\!\! \textnormal{(truncation)} \\
	& \approx \sum_{n=1}^N \textnormal{exp}\bigg(\sum_{m=1}^{M}\sqrt{\lambda_m^h}\varphi_m^h(x_n)\theta_m(\omega)\bigg) \phi_n(x) &\!\!\! \textnormal{(interpolation)} \\
	& = \sum_{n=1}^N \textnormal{exp}\bigg( \sum_{m=1}^{M} \sqrt{\lambda_m^h} \bigg(\sum_{n'=1}^N c_{m{n'}} \phi_{n'}(x_n) \bigg) \theta_m(\omega)  \bigg) \phi_n(x) &\!\!\! \textnormal{(nodal basis)} \\
	& = \sum_{n=1}^N \textnormal{exp} \bigg( \sum_{m=1}^{M} c_{mn} \sqrt{\lambda_m^h} \theta_m(\omega) \bigg) \phi_n(x) = a_{h,\omega}(x) &
\end{aligned}	
	\label{Sto-Coeff-KLE}
\end{equation}
which relies on the Kronecker delta property $\phi_{n'}(x_n)=\delta_{n'n}$ for $1\leq n',n\leq N$.

To numerically obtain the optimal pathwise control for the coefficient realization \eqref{Sto-Coeff-KLE}, the classical gradient descent method \cite{sun2020distributed,troltzsch2010optimal} is employed and terminated at a prescribed number of iterations, yielding the finite-dimensional representation
\begin{equation}\label{Pathwise-OptCtrl-Discrete}
	f_{h,\omega}^*(x) = \sum_{n=1}^N f_{n,\omega}^*\phi_n(x)
\end{equation}
for the solution mapping \eqref{Solution-Mapping-StoCtrl-Pathwise-Numerical}. For brevity, the detailed iterative process is deferred to the data generation stage in Algorithm~\ref{Alg-RpCtrl-DataDriven} or \autoref{figure-flowchart-RpCtrl-data-driven} \cite{sun2020distributed,troltzsch2010optimal}, with notations
\begin{equation*}
	b_k(u_h,v_h)= \int_D a_{h,\omega_k} \nabla u_h \cdot\nabla v_h\, dx\ \ \textnormal{and}\ \ (f_h,v_h) = \int_D f_h v_h\, dx
\end{equation*}
defined for any deterministic functions $u_h$, $v_h$, and $f_h\in V_h$.

\subsection{Supervised neural operator learning} 

Once the optimal pathwise control \eqref{Pathwise-OptCtrl-Discrete} is computed for its given coefficient realization \eqref{Sto-Coeff-KLE}, the discrete mapping \eqref{Solution-Mapping-StoCtrl-Pathwise-Numerical} can be recast as an image-to-image regression task
\begin{equation}\label{Solution-Mapping-StoCtrl-Pathwise-Img2Img}
	\mathcal{S}_{h,\omega}: \mathbb{R}^{B \times 1 \times H\times W} \to \mathbb{R}^{B \times 1 \times H\times W}, \qquad A_\omega \mapsto F_\omega^*,
\end{equation}
where the nodal values for both $a_{h,\omega}(x)$ and $f_{h,\omega}^*(x)$ are geometrically reshaped into the image-like tensors $A_\omega$ and $F_\omega^*\in \mathbb{R}^{B \times 1\times H\times W}$ according to the uniform triangulation $\mathcal{T}_h$ of spatial domain $D$ \cite{jia2025novel}. Here, the input and output tensors take the shape $B \times 1\times H\times W$ as shown in \autoref{figure-flowchart-RpCtrl-data-driven}, where $B$, $1$, $H$, and $W$ denote the batch size, number of channels, spatial height, and spatial width, respectively \cite{jia2025novel,hwang2022solving}. Besides, to ensure a stable training procedure, both input and output tensors are normalized prior to network optimization (not relabelled for notational simplicity).

With the robust stochastic control problem \eqref{Sto-Ctl-Weak} converted to a regression task, we proceed to parametrize the discrete mapping \eqref{Solution-Mapping-StoCtrl-Pathwise-Img2Img} using a neural operator
\begin{equation}\label{RpCtrl-data-driven}
	\mathcal{N}_\theta(A_\omega) = F_\omega^*,
\end{equation}
where $\theta\in\mathbb{R}^p$ denotes the collection of trainable parameters and the neural operator can be constructed using various modern architectures, including the U-Net \cite{winovich2019convpde,jia2025novel}, deep operator networks \cite{lu2021learning}, Fourier neural operator \cite{li2020fourier}, and their variants.

\begin{algorithm}[t!]
\caption{Data-Driven Robust-Pathwise Control}
\begin{algorithmic}
\STATE \textbf{Input:} step sizes $\eta,\tau>0$, maximum iteration count $I$ and epoch count $J$.
\STATE \textbf{Output:} data-driven robust-pathwise control $\mathcal{N}_{\theta^*}$.

\STATE \textit{\% Data Generation}
\STATE -- compute $\{ \lambda_m^h, \varphi^h_m(x) \}_{m\geq 1}$ via the generalized eigenvalue problem \eqref{KLE-Generalized-Eigenprob};
\STATE -- generate $\{a_{h,\omega_k}(x)\}_{k=1}^K$ via the truncated Karhunen--Lo$\grave{\text{e}}$ve expansion \eqref{Sto-Coeff-KLE};
\STATE -- determine their optimal pathwise controls $\{f^*_{h,\omega_k}(x) \approx f^{(I)}_{h,\omega_k}(x) \}_{k=1}^K$ in parallel:
\STATE \hspace*{.6em} \textbf{initialize} $f^{(0)}_{h,\omega_k}=0$, \textbf{then} \textbf{for} $i = 0$ \textbf{to} $I - 1$ \textbf{do}

\STATE \hspace*{1.2em} find $u_{h,\omega_k}^{(i)} \in V_h$ such that $\displaystyle b_k \big(u_{h,\omega_k}^{(i)}, v_h\big) = \big(f_{h,\omega_k}^{(i)}, v_h\big)$ for any $v_h\in V_h$;
\STATE \hspace*{1.2em} find $\xi_{h,\omega_k}^{(i)} \in V_h$ such that $\displaystyle b_k\big(\xi_{h,\omega_k}^{(i)}, \eta_h\big) = \big(u_{h,\omega_k}^{(i)}-U, \eta_h\big)$ for any $\eta_h \in V_h$;
\STATE \hspace*{1.2em} control update via $\displaystyle f^{(i+1)}_{h,\omega_k} = f^{(i)}_{h,\omega_k} - \eta \big(\beta f^{(i)}_{h,\omega_k} + \xi^{(i)}_{h,\omega_k}\big)$ with step size $\eta>0$;
\STATE \hspace*{.6em} \textbf{end for} 

\STATE \textit{\% Training Process}
\STATE -- convert $\{a_{h,\omega_k}(x), f^*_{h,\omega_k}(x)\}_{k=1}^K$ into normalized image tensors $\{ A_{\omega_k}, F^*_{\omega_k} \}_{k=1}^K$;
\STATE \textbf{initialize} $\mathcal{N}_{\theta}$, \textbf{then} \textbf{for} $j = 0$ \textbf{to} $J - 1$ \textbf{do}
\STATE -- construct empirical loss over a randomly shuffled mini-batch $\{ A_{\omega_i}, F^*_{\omega_i} \}_{i=1}^B$:
\vspace{-.7em}
\begin{equation*}
	\mathcal{L}( \theta ) = \frac{1}{B} \sum_{i=1}^B \big\| \mathcal{N}_\theta(A_{\omega_i}) - F^*_{\omega_i} \big\|_2^2;
\end{equation*}
\vspace{-1.1em}
\STATE -- update trainable parameters via optimizer with a suitable learning rate $\tau>0$;
\STATE \textbf{end for} 

\STATE \textit{\% Testing Process}
\STATE -- infer pathwise controls for unseen coefficients through a single forward pass:
\vspace{-.85em}
\begin{equation*}
	 f_{h,\omega_k}^{\ms \theta^*}(x) = \sum_{n=1}^N  \left[ F^*_{\omega} \right]_n \phi_n(x)\ \ \textnormal{where}\ \ F^*_{\omega} = \mathcal{N}_{\theta^*}(A_{\omega}).
\end{equation*}
\vspace{-1.2em}
\end{algorithmic}
\label{Alg-RpCtrl-DataDriven}
\end{algorithm}

For the data-driven training of the surrogate models, a large training dataset of input-output pairs $\{ A_{\omega_k}, F^*_{\omega_k} \}_{k=1}^K$ is computed via the traditional Monte Carlo finite element simulations (see Algorithm~\ref{Alg-RpCtrl-DataDriven} for details \cite{ali2017multilevel}), followed by the minimization of the mean squared error between network predictions and target controls
\begin{equation*} 
    \theta^* = \operatorname*{arg\,min}_{\theta\in\mathbb{R}^p} \frac{1}{K} \sum_{k=1}^K \big\| \mathcal{N}_\theta(A_{\omega_k}) - F_{\omega_k}^* \big\|_2^2
\end{equation*}
using a mini-batch gradient descent algorithm. Unlike existing neural operators that typically serve to accelerate the computation of state and adjoint equations \cite{wang2021fast,hwang2022solving}, our trained model $\mathcal{N}_{\theta^*}(A_\omega)$ bypasses the iterative optimization procedure. Owing to its ability to generalize to unseen coefficient realizations (see Section~\ref{Sec-Num-Exps}), the optimal pathwise control can be inferred through a single forward pass within seconds.

While traditional deterministic and stochastic control strategies (see Section~\ref{Sec-Ctrl-Overview} or \autoref{fig-big-picture}) compromise between robustness and adaptivity, our proposed approach unifies the advantages of both and is thus referred to as the robust-pathwise control. The supervised learning procedure is detailed in Algorithm~\ref{Alg-RpCtrl-DataDriven}, in which the trained model achieves superior target-matching performance compared to the deterministic control and drastically reduced inference time compared to the pathwise control.


\section{Physics-informed robust-pathwise control}\label{Sec-RpCtrl-physics-informed}

To eliminate the reliance on high-quality labeled data, this section provides a physics-informed learning approach to train our robust-pathwise control models. The derivation of such an unsupervised approach is detailed, alongside a theoretical study on the neural operator that maps the random coefficient to its corresponding optimal robust stochastic control.

\begin{figure}[t!]
\centering
\includegraphics[width=0.88\textwidth]{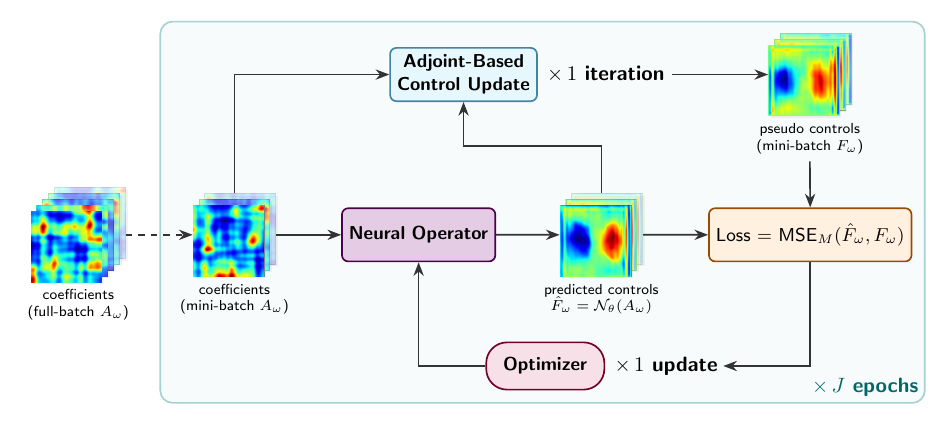}
\vspace{-1.em}
\caption{Unsupervised learning flowchart for our proposed physics-informed robust-pathwise control.}
\label{figure-flowchart-RpCtrl-physics-informed}
\vspace{-0.5em}
\end{figure}

\subsection{Stochastic control via neural operator} 

Given a high-quality training dataset, the data-driven robust-pathwise control \eqref{RpCtrl-data-driven} provides an efficient computational surrogate for our benchmark robust stochastic control problem \eqref{Sto-Ctl-Weak}, enabling rapid inference with satisfactory accuracy across different network architectures (see Section~\ref{Sec-Num-Exps}). However, the substantial cost of data generation remains a bottleneck in practice, underscoring the need for unsupervised learning algorithms.

To eliminate the data dependency, the robust stochastic control problem \eqref{Sto-Ctl-Weak} is resolved by parametrizing the control function through a neural operator, therefore resulting in a finite-dimensional optimization problem over trainable parameters. By including a Tikhonov regularization term for network parameters, the loss function
\begin{equation}\label{Loss-RpCtrl-unSL-continuous}
\mathcal{J} ( \theta ) = \mathbb{E} \left[ \frac12 \| u(x,\omega) - U(x) \|^2_{L^2(D)} + \frac{\beta}{2} \| f_\theta(x,\omega ) \|^2_{L^2(D)} \right] + \frac{\gamma}{2} \| \theta \|_2^2
\end{equation}
is minimized subject to the elliptic problem with uncertain coefficients
\begin{equation}\label{Constraints-RpCtrl-unSL-continuous}
\begin{array}{cl}
\displaystyle - \nabla \cdot \left( a(x,\omega)\nabla u(x,\omega)\right) = f_\theta( x,\omega ),\ \ & \textnormal{in}\ D \otimes\Omega,\\
\displaystyle u(x,\omega)=0,\ \ & \textnormal{on}\ \partial D \otimes \Omega.
\end{array}
\end{equation}
Here, $\gamma > 0$ represents a regularization parameter and the solution mapping \eqref{Solution-Mapping-StoCtrl} is parametrized by a neural operator $f_\theta(x,\omega) = \mathcal{N}_\theta( a(x,\omega) )$ that is assumed to satisfy
\begin{itemize}
\item[(A1)] the mapping $\theta \mapsto f_\theta(\cdot,\omega)$ is continuous in $L^2(D)$ for a.e. $\omega \in \Omega$;
\item[(A2)] $\lVert f_\theta(\cdot,\omega) \rVert_{L^2(D)} \le C(\omega)(1 + \|\theta\|_2)$ with $C(\omega)\in L^2(\Omega)$ for a.e. $\omega \in \Omega$.
\end{itemize}
It is worth noting that the neural operator $\mathcal{N}_\theta( a(x,\omega) )$ establishes a mapping \eqref{Solution-Mapping-StoCtrl} between infinite-dimensional spaces, while its discrete counterpart $\mathcal{N}_\theta( A_\omega )$ defines a mapping \eqref{RpCtrl-data-driven} between finite-dimensional vector or tensor fields.

Note that under the assumption \eqref{Assumption-Coeff} on random coefficients, the state equation \eqref{Constraints-RpCtrl-unSL-continuous} admits a unique weak solution $u\in L^2(\Omega; H_0^1(D))$ for any static model parameter $\theta\in\mathbb{R}^p$. Moreover, an application of the Lax-Milgram lemma and the Poincar\'e inequality \cite{brenner2007mathematical,hou2011finite} yields 
\begin{equation}
	\lVert u \rVert_{L^2(\Omega; H_0^1(D))} \leq \frac{C}{a_{\textnormal{min}}} \lVert f_\theta \rVert_{L^2(\Omega; L^2(D))}
	\label{Priori-Estimate}
\end{equation}
for some generic constant $C>0$. Then we detail the existence proof for the solution to our physics-informed robust-pathwise control problem, following the conventional framework \cite{lions1971optimal} but within a finite-dimensional parameter space.

\begin{theorem}
Under assumptions (A1) and (A2), there exist model parameters $\theta^* \in \mathbb{R}^p$ that minimize the objective function \eqref{Loss-RpCtrl-unSL-continuous} under constraint equations \eqref{Constraints-RpCtrl-unSL-continuous}.
\end{theorem}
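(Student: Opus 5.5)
We argue by the direct method of the calculus of variations in the finite-dimensional parameter space $\mathbb{R}^p$. By Lemma~\ref{Lemma-weakBVP-Existence} and the a priori estimate \eqref{Priori-Estimate}, each $\theta\in\mathbb{R}^p$ determines a unique state $u_\theta\in L^2(\Omega;H_0^1(D))$, provided $f_\theta\in L^2(\Omega;L^2(D))$. This is guaranteed by (A2), since $\mathbb{E}\|f_\theta\|^2_{L^2(D)}\le 2\|C\|^2_{L^2(\Omega)}(1+\|\theta\|_2^2)<\infty$. Hence the reduced objective $\mathcal{J}(\theta)$ in \eqref{Loss-RpCtrl-unSL-continuous} is a well-defined map $\mathbb{R}^p\to[0,\infty)$. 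Set $m=\inf_\theta \mathcal{J}(\theta)\ge 0$ and choose a minimizing sequence $\{\theta_n\}$ with $\mathcal{J}(\theta_n)\to m$.

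\textbf{Compactness.} Since every term in $\mathcal{J}$ is nonnegative, $\frac{\gamma}{2}\|\theta_n\|_2^2\le \mathcal{J}(\theta_n)\le \mathcal{J}(\theta_1)$ for $n$ large. The sequence $\{\theta_n\}$ is therefore bounded in $\mathbb{R}^p$, and by Bolzano--Weierstrass a subsequence (not relabelled) converges to some $\theta^*\in\mathbb{R}^p$. Here the Tikhonov term $\gamma>0$ supplies coercivity, and finite dimensionality turns boundedness into strong convergence. This is why no weak topology on controls is needed, unlike in the classical framework \cite{lions1971optimal}.

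\textbf{Continuity of $\mathcal{J}$.} The goal is to show $\mathcal{J}(\theta_n)\to\mathcal{J}(\theta^*)$, so that $\mathcal{J}(\theta^*)=m$. First, $f_{\theta_n}\to f_{\theta^*}$ in $L^2(\Omega;L^2(D))$. By (A1), $\|f_{\theta_n}(\cdot,\omega)-f_{\theta^*}(\cdot,\omega)\|_{L^2(D)}\to0$ for a.e.\ $\omega$. By (A2) together with $\sup_n\|\theta_n\|_2\le R$, this quantity squared is dominated by $4C(\omega)^2(1+R)^2$, which lies in $L^1(\Omega)$. Dominated convergence then gives the claim. Second, by linearity of the state equation, $u_{\theta_n}-u_{\theta^*}$ solves \eqref{Constraints-RpCtrl-unSL-continuous} with right-hand side $f_{\theta_n}-f_{\theta^*}$. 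The estimate \eqref{Priori-Estimate} yields $u_{\theta_n}\to u_{\theta^*}$ in $L^2(\Omega;H_0^1(D))$, and hence in $L^2(\Omega;L^2(D))$. Finally, both quadratic terms $\mathbb{E}\|u-U\|^2$ and $\mathbb{E}\|f\|^2$ are continuous under strong $L^2$ convergence, and $\|\theta\|_2^2$ is continuous. Thus $\mathcal{J}$ is continuous along the sequence, and $\theta^*$ is a minimizer. Lower semicontinuity would suffice, but full continuity comes for free here.

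\textbf{Main obstacle.} The main delicate step is the passage from pointwise-in-$\omega$ continuity (A1) to convergence in the Bochner space $L^2(\Omega;L^2(D))$. This requires the uniform integrable majorant built from (A2) and the boundedness of $\{\theta_n\}$. One should also check measurability of $\omega\mapsto f_\theta(\cdot,\omega)$, which is implicit in (A2) treating $f_\theta$ as an element of $L^2(\Omega;L^2(D))$. Uniqueness is not claimed: $\theta\mapsto f_\theta$ is generally nonconvex, so the argument yields existence only.
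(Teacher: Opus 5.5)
Your proposal is correct and follows the paper's proof essentially step for step: coercivity from the Tikhonov term, Bolzano--Weierstrass in $\mathbb{R}^p$, dominated convergence with the envelope $4C(\omega)^2(1+R)^2$ from (A1)--(A2) to get $f_{\theta_n}\to f_{\theta^*}$ in $L^2(\Omega;L^2(D))$, and the a priori state estimate to pass to the limit in $\mathcal{J}$. One minor slip: the bound $\mathcal{J}(\theta_n)\le\mathcal{J}(\theta_1)$ for large $n$ can fail when $\theta_1$ is already optimal, so justify boundedness instead by the convergence $\mathcal{J}(\theta_n)\to m<\infty$.
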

\begin{proof}
Note that the Tikhonov regularization term incorporated in \eqref{Loss-RpCtrl-unSL-continuous} implies the coercivity of the objective function \cite{lions1971optimal}, that is,
\begin{equation*}
	\mathcal{J}(\theta) \ge \frac{\gamma}{2} \lVert \theta \rVert_2^2.
\end{equation*}
As a result, any minimizing sequence $\{\theta^{(j)} \in \mathbb{R}^p \}_{j=1}^\infty$ such that
\begin{equation*}
	\lim_{j \to \infty} \mathcal{J}(\theta^{(j)}) = \inf_{\theta \in \mathbb{R}^p} \mathcal{J}(\theta)
\end{equation*}
is uniformly bounded \cite{lions1971optimal}, which suggests the existence of a convergent subsequence $\{\theta^{(j')} \}_{j'=1}^\infty$ with some limit $\theta^* \in \mathbb{R}^p$, namely, $\theta^{(j')} \to \theta^*$ in $\mathbb{R}^p$ as $j'\to\infty$. We denote by $\displaystyle K = \sup \lVert \theta^{(j')} \rVert_2 < \infty$ the uniform bound of the subsequence.

Accordingly, the continuity assumption (A1) of neural operator implies that
\begin{equation*}
	\lim_{j' \to \infty} \lVert f_{\theta^{(j')}}(\cdot, \omega) - f_{\theta^*}(\cdot, \omega) \rVert_{L^2(D)} = 0 \quad \textnormal{for a.e. } \omega \in \Omega.
\end{equation*}
On the other hand, assumption (A2) leads to an integrable dominating envelope
\begin{equation*}
	\lVert f_{\theta^{(j')}}(\cdot, \omega)  - f_{\theta^*}(\cdot, \omega)  \rVert_{L^2(D)}^2 \leq 4 C(\omega)^2 (1 + K)^2,
\end{equation*}
which enables the use of the Lebesgue dominated convergence theorem to arrive at
\begin{equation*}
	\lim_{j' \rightarrow \infty} \mathbb{E}\left[ \|f_{\theta^{(j')}} - f_{\theta^*}\|_{L^2(D)}^2 \right] = \mathbb{E}\left[ \lim_{j' \rightarrow \infty}  \|f_{\theta^{(j')}} - f_{\theta^*}\|_{L^2(D)}^2 \right] = 0.
\end{equation*}

Owing to the linearity of the governing equations \eqref{Constraints-RpCtrl-unSL-continuous} and the estimation \eqref{Priori-Estimate}, the corresponding state difference vanishes in the limit, that is,
\begin{equation*}
	\lim_{j' \rightarrow \infty} \lVert u^{(j')} - u^* \rVert_{L^2(\Omega; H_0^1(D))} \leq \lim_{j' \rightarrow \infty} \frac{C}{a_{\textnormal{min}}} \lVert f_{\theta^{(j')}} - f_{\theta^*} \rVert_{L^2(\Omega; L^2(D))} = 0,
\end{equation*}
where $u^{(j')}$ and $u^*$ denote weak solutions to \eqref{Constraints-RpCtrl-unSL-continuous} with source terms $f_{\theta^{(j')}}$ and $f_{\theta^*}$, respectively. Consequently, the strong convergence of model parameters, control and state functions allow a direct pass to the limit in the loss function \eqref{Loss-RpCtrl-unSL-continuous}, namely,
\begin{equation*}
	\mathcal{J}(\theta^*) = \lim_{j' \to \infty} \mathcal{J}(\theta^{(j')}) = \inf_{\theta \in \mathbb{R}^p} \mathcal{J}(\theta),
\end{equation*}
which confirms the existence of trainable parameters $\theta^*\in\mathbb{R}^p$ that minimize the loss function \eqref{Loss-RpCtrl-unSL-continuous} subject to the constrained equations \eqref{Constraints-RpCtrl-unSL-continuous}.
\end{proof}

\subsection{Unsupervised neural operator learning} \label{Sec-RpCtrl-physics-informed-algorithm}

Upon applying the standard Monte Carlo method over coefficient realizations $\{a_{h,\omega_k}(x)\}_{k=1}^K$ alongside a Galerkin projection onto our finite element space $V_h = \text{span}\{\phi_1, \cdots, \phi_N\} \!\subset\! H_0^1(D)$, the robust stochastic control problem \eqref{Sto-Ctl-Weak} could be computationally recast as an unsupervised task. To be precise, the discrete coefficients of our proposed robust-pathwise control function are generated through a neural operator for each sample realization
\begin{equation*}
	f_{h,\omega_k}^{\ms \theta}(x) = \sum_{n=1}^N  \left[ \mathcal{N}_\theta( A_{\omega_k} ) \right]_n \phi_n(x).
\end{equation*}
Here, $\mathcal{N}_\theta( \cdot )$ is the neural operator introduced in \eqref{RpCtrl-data-driven} for supervised learning, whose output is flattened into a vector corresponding to finite element basis functions.

The inclusion of a Tikhonov regularization term for trainable parameters results in minimizing the objective function
\begin{equation}
	\mathcal{J} (\theta) = \frac{1}{K}\sum_{k=1}^K \left( \frac12 \| u_{h,\omega_k} - U \|^2_{L^2(D)} + \frac{\beta}{2} \| f_{h,\omega_k}^{\ms \theta} \|^2_{L^2(D)} \right) + \frac{\gamma}{2}\|\theta\|^2
\end{equation}
subject to the following weak elliptic constraints for $1\leq k\leq K$
\begin{equation}
	b_k( u_{h,\omega_k},  v_h ) = (f_{h,\omega_k}^{\ms \theta}, v_h), \ \ \ \textnormal{for any}\ v_h \in V_h.
	\label{RpCtl-StateEqn-Weak}
\end{equation}

Next, by introducing the test functions of \eqref{RpCtl-StateEqn-Weak} as Lagrange multipliers for each governing equation, we arrive at the unconstrained Lagrangian functional 
\begin{equation*}
	\mathcal{F} ( \{u_{h,\omega_k}\}_{k=1}^K, \{\xi_{h,\omega_k}\}_{k=1}^K, \theta ) = \mathcal{J}(\theta) - \frac{1}{K}\sum_{k=1}^K \left( b_k( u_{h,\omega_k},  \xi_{h,\omega_k} ) - (f_{h,\omega_k}^{\ms \theta}, \xi_{h,\omega_k}) \right)
\end{equation*}
where the Lagrange multiplier $\xi_{h,\omega_k}\in V_h$ is required to satisfy the adjoint equation 
\begin{equation}
	b_k( \xi_{h,\omega_k},  \eta_h ) = (u_{h,\omega_k} - U, \eta_h), \ \ \ \textnormal{for any}\ \eta_h \in V_h.
	\label{RpCtl-AdjointEqn-Weak}
\end{equation}
On the other hand, evaluating the gradient of Lagrangian functional with respect to model parameters at the solutions of state \eqref{RpCtl-StateEqn-Weak} and adjoint \eqref{RpCtl-AdjointEqn-Weak} equations gives
\begin{equation*}
\begin{aligned}
	\nabla_\theta \mathcal{F} & = \frac{1}{K}\sum_{k=1}^K \big( \beta f_{h,\omega_k}^{\ms\theta} + \xi_{h,\omega_k}, \nabla_\theta f_{h,\omega_k}^{\ms\theta} \big) + \gamma \theta \\
	& = \frac{1}{K}\sum_{k=1}^K \big( \beta f_{h,\omega_k}^{\ms\theta} + \xi_{h,\omega_k}, \sum_{n=1}^N  \nabla_\theta \left[ \mathcal{N}_\theta( A_{\omega_k} ) \right]_n \phi_n(x) \big) + \gamma \theta \\
	& = \frac{1}{K}\sum_{k=1}^K \big( \nabla_\theta \mathcal{N}_\theta( A_{\omega_k} )  \big)^T M \big( \beta \mathcal{N}_\theta( A_{\omega_k} ) + \Xi_{\omega_k} \big) + \gamma \theta.
\end{aligned}	
\end{equation*}
where the final equality recasts the gradient into a matrix-vector form by expanding the adjoint function in terms of basis functions and introducing the mass matrix
\begin{equation*}
	\xi_{h,\omega_k}(x) = \sum_{n=1}^N \left[ \Xi_{\omega_k} \right]_n \phi_n(x) \quad \textnormal{and} \quad  M = \left[ \int_D \phi_i(x) \phi_j(x) \, dx \right]_{i,j=1}^N.
\end{equation*}

Specifically, the $j$-th iteration step proceeds by first solving the state \eqref{RpCtl-StateEqn-Weak} and adjoint \eqref{RpCtl-AdjointEqn-Weak} equations with control surrogate $\mathcal{N}_{\theta^{(j)}}( A_{\omega_k} ) \in\mathbb{R}^N$. Once the coefficient vector $\Xi_{\omega_k}^{(j)} \in \mathbb{R}^N$ is obtained across all sample realizations, the trainable parameters are updated according to the full-batch gradient descent with a learning rate $\tau>0$
\begin{equation}
	\theta^{(j+1)} = (1-\tau \gamma) \theta^{(j)} - \frac{\tau}{K}\sum_{k=1}^K \big( \nabla_\theta \mathcal{N}_{\theta^{(j)}}( A_{\omega_k} )  \big)^T M \big( \beta \mathcal{N}_{\theta^{(j)}}( A_{\omega_k} ) + \Xi_{\omega_k}^{(j)} \big).
	\label{RpCtl-Parameter-Update}
\end{equation}
To facilitate the practical implementation, we introduce a pseudo control target with a step size parameter $\eta>0$, which is detached from the computational graph
\begin{equation*}
	F_{\omega_k}^{(j+1)} = \mathcal{N}_{\theta^{(j)}}( A_{\omega_k} )  - \eta \big( \beta \mathcal{N}_{\theta^{(j)}}( A_{\omega_k} ) + \Xi_{\omega_k}^{(j)} \big).
\end{equation*}
This allows the update rule \eqref{RpCtl-Parameter-Update} to be recast as a single backpropagation step that aims at minimizing the weakly supervised loss with a learning rate $\tau/\eta$
\begin{equation}
	\mathcal{L}(\theta) = \frac{1}{2K} \sum_{k=1}^K \lVert  \mathcal{N}_\theta(A_{\omega_k}) - F_{\omega_k}^{(j+1)} \rVert_M^2,
	\label{RpCtl-Loss-Unsupervised}
\end{equation}
under weight decay \cite{loshchilov2017decoupled}. The $M$-weighted norm is defined as $\Vert{} v \Vert{}_M^2 = v^\top M v$ for any $v \in \mathbb{R}^N$ and the gradient of our Tikhonov regularization term \eqref{RpCtl-Parameter-Update} is conducted via the weight decay module of deep learning optimizers \cite{loshchilov2017decoupled}.

\begin{remark}
Calculating the gradient of our weakly supervised loss \eqref{RpCtl-Loss-Unsupervised} at the $j$-th iteration via the chain rule yields
\begin{equation*}
	\nabla_\theta \mathcal{L}( \theta^{(j)} ) = \frac{1}{K} \sum_{k=1}^K \big( \nabla_\theta \mathcal{N}_{\theta^{(j)}}(A_{\omega_k}) \big)^T M \big( \mathcal{N}_{\theta^{(j)}}(A_{\omega_k}) - F_{\omega_k}^{(j+1)} \big).
\end{equation*}
Then the substitution of our pseudo control target $F_{\omega_k}^{(j+1)}$ induces a cancellation of the detached network output $\mathcal{N}_{\theta^{(j)}}(A_{\omega_k})$, thereby resulting in
\begin{equation*}
	\nabla_\theta \mathcal{L}(\theta^{(j)}) = \frac{\eta}{K} \sum_{k=1}^K \big( \nabla_\theta \mathcal{N}_{\theta^{(j)}}(A_{\omega_k}) \big)^T M  \big( \beta \mathcal{N}_{\theta^{(j)}}(A_{\omega_k}) + \Xi_{\omega_k}^{(j)} \big) 
\end{equation*}
that recovers exactly the adjoint-based parameter update \eqref{RpCtl-Parameter-Update} under weight decay.
\end{remark}

It is worth noting that while the original robust stochastic control problem \eqref{Sto-Ctl-Weak} is convex, our parametrization of control function using neural operators \cite{ronneberger2015u,jia2025novel,lu2021learning} renders the weakly supervised learning task \eqref{RpCtl-Loss-Unsupervised} highly non-convex. Then, the full-batch gradient descent approach \eqref{RpCtl-Parameter-Update} becomes both computationally expensive and prone to trapping in local minima or saddle points, thereby motivating the adoption of a mini-batch training loss 
\begin{equation}
	\mathcal{L}(\theta) = \frac{1}{2B} \sum_{i=1}^B \lVert  \mathcal{N}_\theta(A_{\omega_i}) - F_{\omega_i}^{(j+1)} \rVert_M^2,
	\label{RpCtl-Loss-Unsupervised-MiniBatch}
\end{equation}
over $B \ll K$ sample realizations instead of \eqref{RpCtl-Loss-Unsupervised} in practice. Implementation details are illustrated in Algorithm~\ref{Alg-RpCtrl-PhysicsInformed}, followed by a concise discussion on the reduction in computational cost achieved via the generalization capability of neural operators.

\begin{algorithm}[t!]
\caption{Physics-Informed Robust-Pathwise Control}
\begin{algorithmic}
\STATE \textbf{Input:} step sizes $\eta,\tau>0$, mini-batch data size $B$ and maximum epoch count $J$.
\STATE \textbf{Output:} physics-informed robust-pathwise control $\mathcal{N}_{\theta^*}$.

\STATE \textit{\% Data Generation}
\STATE -- compute $\{ \lambda_m^h, \varphi^h_m(x) \}_{m\geq 1}$ via the generalized eigenvalue problem \eqref{KLE-Generalized-Eigenprob};
\STATE -- generate $\{a_{h,\omega_k}(x)\}_{k=1}^K$ via \eqref{Sto-Coeff-KLE} to construct normalized tensors $\{ A_{\omega_k} \}_{k=1}^K$;

\STATE \textit{\% Training Process}

\STATE \textbf{initialize} $\mathcal{N}_{\theta}$, \textbf{then} \textbf{for} $j = 0$ \textbf{to} $J - 1$ \textbf{do}

\STATE -- compute pseudo controls $\{F_{\omega_i} \}_{i=1}^B$ for mini-batch inputs $\{ A_{\omega_i} \}_{i=1}^B$ in parallel:
\STATE \hspace*{.6em} forward pass of neural operator $\mathcal{N}_\theta(A_{\omega_i})$ to get predicted controls $f_{h,\omega_i}^{\ms\theta}(x)$;
\STATE \hspace*{.6em} find $u_{h,\omega_i} \in V_h$ such that $\displaystyle b_i (u_{h,\omega_i}, v_h) = (f_{h,\omega_i}^{\ms \theta}, v_h )$ for any $v_h\in V_h$;
\STATE \hspace*{.6em} find $\xi_{h,\omega_i} \in V_h$ such that $\displaystyle b_i (\xi_{h,\omega_i}, \eta_h ) = (u_{h,\omega_i}-U, \eta_h)$ for any $\eta_h \in V_h$;
\STATE \hspace*{.6em} control update via $\displaystyle F_{\omega_i} = \mathcal{N}_{\theta}( A_{\omega_i} )  - \eta ( \beta \mathcal{N}_{\theta}( A_{\omega_i} ) + \Xi_{\omega_i} )$ with step size $\eta>0$;

\STATE -- detach pseudo controls from computational graph and construct empirical loss:
\vspace{-.7em}
\begin{equation*}
	\mathcal{L}( \theta ) = \frac{1}{2B} \sum_{i=1}^B \big\| \mathcal{N}_\theta(A_{\omega_i}) - F_{\omega_i} \big\|_M^2;
\end{equation*}
\vspace{-1.1em}
\STATE -- update parameters via a single step of gradient descent with weight decay;
\STATE \textbf{end for} 

\STATE \textit{\% Testing Process}
\STATE -- infer pathwise controls for unseen coefficients through a single forward pass:
\vspace{-.85em}
\begin{equation*}
	 f_{h,\omega}^{\ms {\theta^*}}(x) = \sum_{n=1}^N  \left[ \mathcal{N}_{\theta^*}( A_{\omega} ) \right]_n \phi_n(x).
\end{equation*}
\vspace{-1.2em}
\STATE \hspace*{.5em} (identical notation from Algorithm~\ref{Alg-RpCtrl-DataDriven} is reused and will be clarified as needed)
\end{algorithmic}
\label{Alg-RpCtrl-PhysicsInformed}
\end{algorithm}

Analogous to the data-driven approach outlined in Algorithm~\ref{Alg-RpCtrl-DataDriven}, the dominant computational overhead within our physics-informed alternative is attributed to the repeated execution of adjoint-based control updates (namely, the sequential solution of state and adjoint equations, followed by the control update). However, Algorithm ~\ref{Alg-RpCtrl-PhysicsInformed} leverages mini-batch training to efficiently generalize across the entire dataset at each iteration, bypassing the need to execute control updates from scratch and store pathwise control functions. This significantly contrasts with the data-driven method presented in Algorithm~\ref{Alg-RpCtrl-DataDriven} as well as the robust deterministic control, which require a complete iteration of control updates for each coefficient realization. 


\section{Numerical experiments}\label{Sec-Num-Exps}

This section presents the computational evaluation of our data-driven and physics-informed robust-pathwise control strategies (see Algorithm~\ref{Alg-RpCtrl-DataDriven} and \ref{Alg-RpCtrl-PhysicsInformed}), applied to minimize the tracking objective \eqref{Sto-Ctl-Objective-Strong} constrained by a second-order elliptic problem \eqref{Sto-Ctl-Constraint-Strong} with stochastic coefficients. The benchmark robust deterministic control is employed as our baseline for comparison, evaluated in terms of tracking error and control effort across unseen coefficient realizations.

To be specific, the deterministic target function in our objective functional \eqref{Sto-Ctl-Objective-Strong} is defined as $U(x_1,x_2)=\sin(2\pi x_1)(\cos(2\pi x_2)-1)$ on a two-dimensional unit square domain $D=[0,1]^2$, along with regularization constant $\beta=10^{-10}$. Spatial discretization across all experiments is conducted using continuous piecewise linear elements on a uniform triangulation $\mathcal{T}_h$ with mesh width $h=2^{-5}$ (unless otherwise specified), resulting in the finite element space $V_h=\textnormal{span}\{\phi_1(x),\cdots,\phi_n(x)\}$. In particular, the log-normal coefficient in \eqref{Sto-Ctl-Constraint-Strong} exhibits limited regularity as detailed in section~\ref{Sec-NumExps-RpCtrl-SL-train}, which results in a high-dimensional representation \eqref{Sto-Coeff-KLE} when approximated through the Karhunen--Lo$\grave{\text{e}}$ve expansion. Accordingly, the Monte Carlo method is utilized to lessen the curse of dimensionality, together with the parametrization of our solution operator via neural network surrogate models.

Model training is conducted through PyTorch on the Nvidia GeForce RTX 4090 GPU cards, with hyperparameter configurations summarized in \autoref{tab:app_arch} for brevity. To ensure computational reproducibility, source codes will be made publicly accessible at \url{https://github.com/q1sun/RpCtrl_StoPDEs} upon the acceptance.

\subsection{Data-driven robust-pathwise control} 

To validate the proposed data-driven method, the following sections first detail the generation of input-output data pairs via the optimal pathwise control strategy, followed by the evaluation of trained model on unseen coefficient realizations against the robust deterministic control. An ablation study on the neural operator architectures concludes this section, alongside numerical experiments extending the proposed method to rougher coefficients.

\subsubsection{Training performance} \label{Sec-NumExps-RpCtrl-SL-train}

To construct the training dataset, we consider the covariance function \eqref{Sto-Coeff-Cov} characterized by $\sigma=10$, $\ell=0.2$, and $\lVert\cdot\rVert=\lVert\cdot\rVert_{L^1(D)}$ as an illustrative example. Following a uniform triangulation of the domain $D=[0,1]^2$ with mesh size $h=2^{-5}$, \autoref{figure-KLE-Eigvalue} shows the largest 40 eigenvalues of \eqref{KLE-Generalized-Eigenprob} as well as their corresponding cumulative energy ratios \eqref{KLE-Eigenprob-Ratio}. This example demonstrates that retaining an energy ratio of $95\%$ for $g(x,\omega)$ requires at least $M\geq 166$ terms in \eqref{KLE-Log-Coeff} and \eqref{Sto-Coeff-KLE}, resulting in high-dimensional random inputs for problem \eqref{Sto-Ctl-Weak}.

\begin{figure}[H]
\centering
\includegraphics[width=0.9\textwidth]{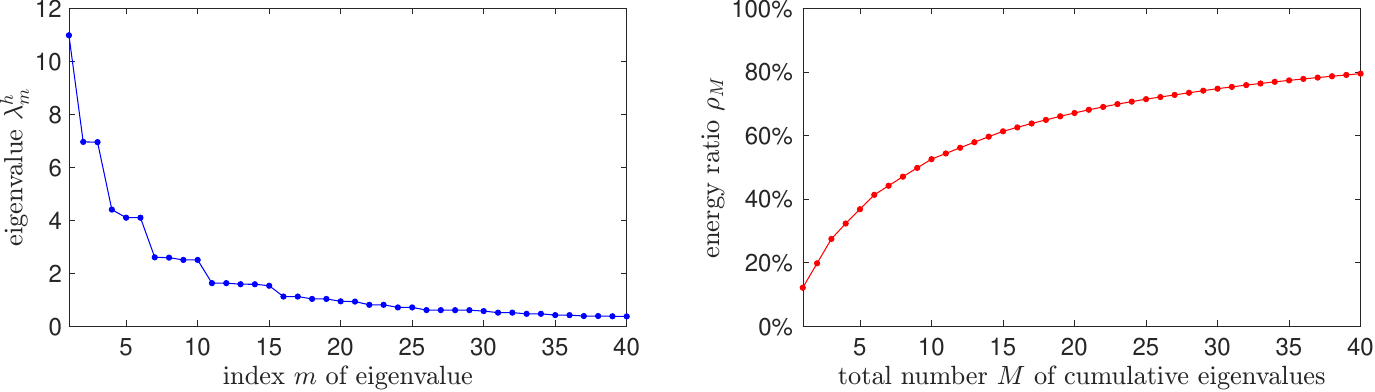}
\vspace{-0.3cm}
\caption{Left to right: the largest 40 eigenvalues and their cumulative energy ratios for \eqref{KLE-Generalized-Eigenprob}.}
\label{figure-KLE-Eigvalue}
\end{figure}

Specifically, the numerical coefficient on a $33\times 33$ spatial grid takes the form
\begin{equation*}
	a_{h,\omega}(x) = \sum_{n=1}^{1089} \textnormal{exp}\left( \sum_{m=1}^{166} c_{mn} \sqrt{\lambda_m^h} \theta_m(\omega)  \right) \phi_n(x),	
\end{equation*}
and the first column of \autoref{figure-train-results-SL} shows its sample realizations with random variables drawn independently and identically from the standard normal distribution $\mathcal{N}(0,1)$. A uniform distribution over $[-\sqrt{3}, \sqrt{3}]$ offers an alternative with zero mean and unit variance, though it is omitted here. Accordingly, the numerical approximation to its optimal pathwise control $f_{h,\omega}^*(x)$ is computed through Algorithm~\ref{Alg-RpCtrl-DataDriven} as displayed in the second column of \autoref{figure-train-results-SL}, executing $I=3\times 10^3$ iterations across $K=5\times 10^3$ samples in parallel. Therefore, we construct a training dataset of input-output pairs $\{ a_{h,\omega_k}(x), f_{h,\omega_k}^*(x) \}_{k=1}^K$, which can be expressed as $\{ A_{\omega_k}, F_{\omega_k}^* \}_{k=1}^K$ by arranging their finite element coefficients onto the $H\times W=33\times 33$ mesh grid.

\begin{figure}[t!]
\centering
\includegraphics[width=0.7\textwidth]{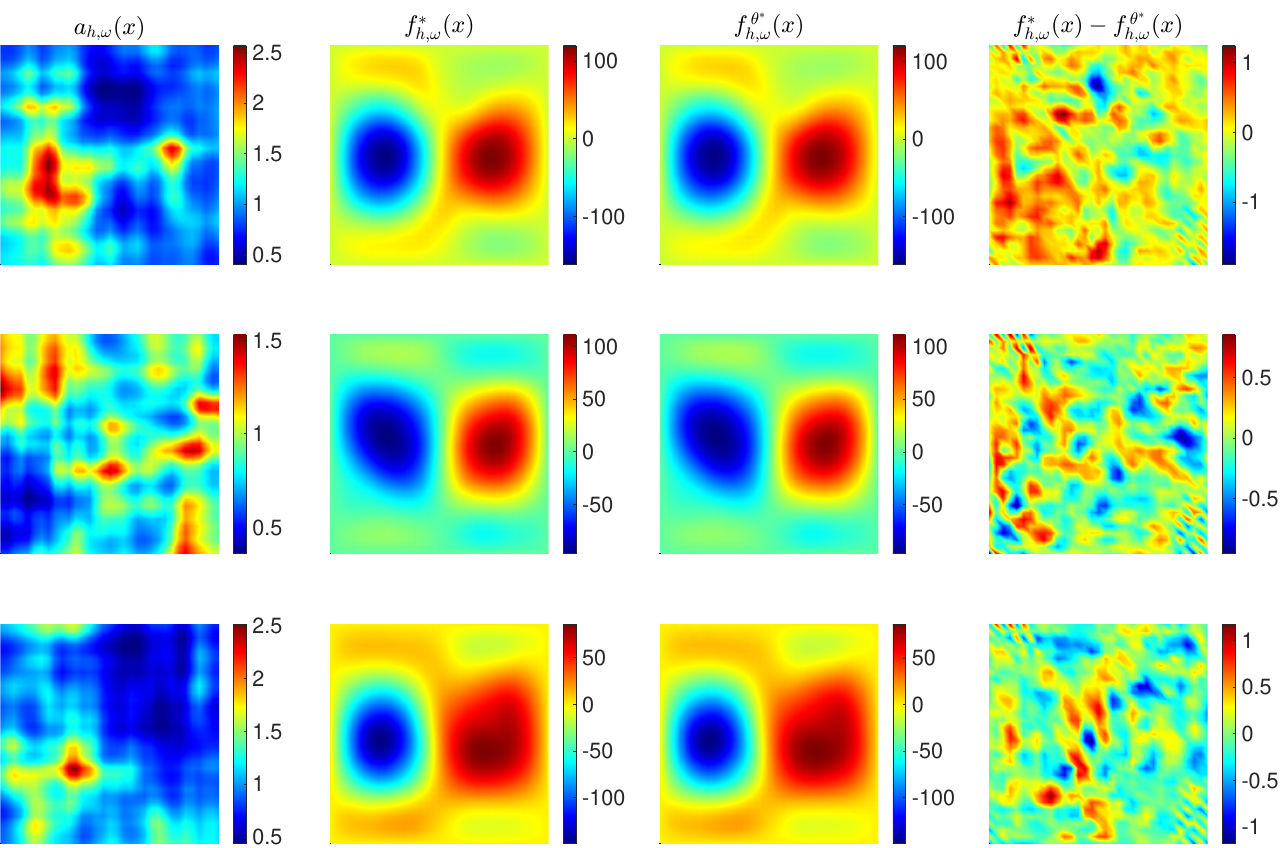}
\vspace{-0.1cm}
\caption{Left to right: coefficient realizations (network input), optimal pathwise controls (target output), data-driven robust-pathwise controls (network prediction), and pointwise error profiles.}
\label{figure-train-results-SL}
\end{figure}

\begin{figure}[t!]
\centering
\includegraphics[width=0.7\textwidth]{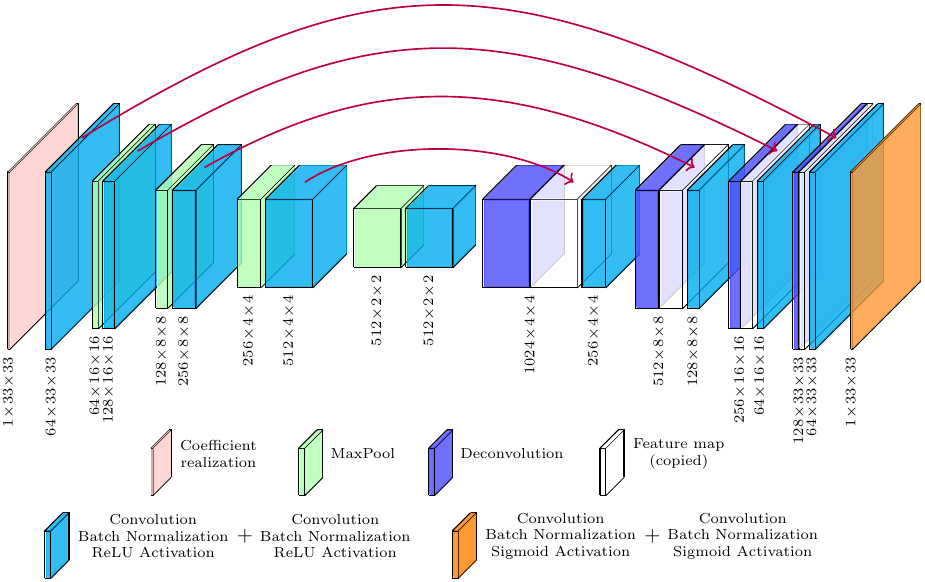}
\vspace*{-0.2cm}
\caption{\textnormal{U-Net architecture for our robust-pathwise control in section~\ref{Sec-Num-Exps}, with feature concatenation indicated by arrows and feature maps color-coded according to layer types.}}
\label{figure-Unet-SL}
\end{figure}

\begin{figure}[t!]
\centering
\includegraphics[width=0.6\textwidth]{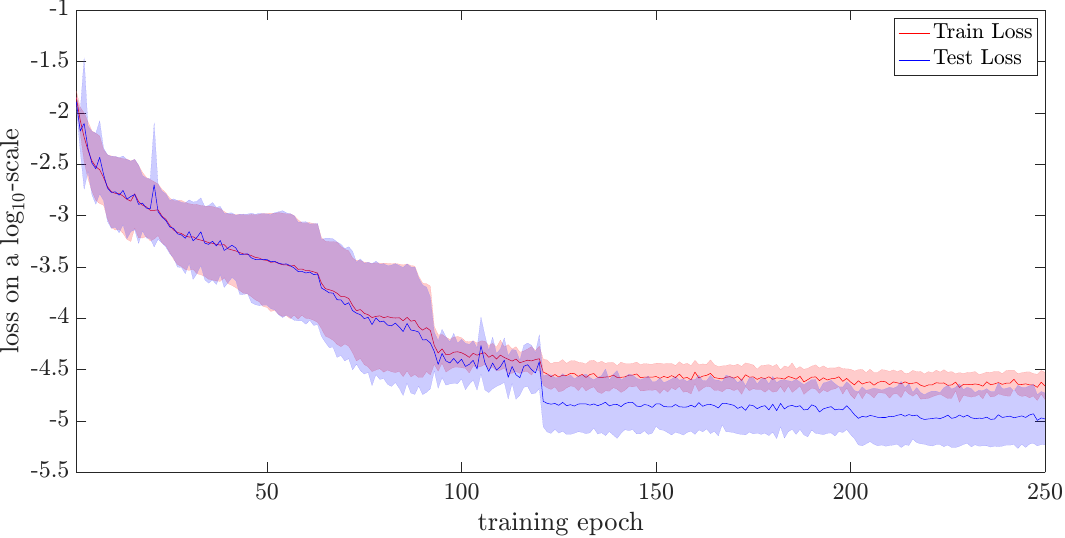}
\vspace{-0.3cm}
\caption{Supervised learning curves (mean $\pm$ standard deviation) over 6 independent runs.}
\label{figure-train-learn-curves-SL}
\vspace{-0.3cm}
\end{figure}

Following the supervised learning paradigm depicted in \autoref{figure-flowchart-RpCtrl-data-driven}, the U-Net \cite{ronneberger2015u} architecture with approximately 13 million trainable parameters is deployed to learn the solution mapping \eqref{Solution-Mapping-StoCtrl-Pathwise-Img2Img} \cite{jia2025novel}. Architected entirely via convolutional layers, U-Net adopts an encoder to extract latent features from coefficient realizations, followed by a decoder that reconstructs the optimal pathwise controls as illustrated in \autoref{figure-Unet-SL}. Training of the surrogate model on the labeled dataset $\{ A_{\omega_k}, F_{\omega_k}^* \}_{k=1}^K$ is executed over 250 epochs utilizing a batch size of $10^2$. Parameter update is performed through the AdamW optimizer \cite{loshchilov2017decoupled} with default settings, starting from a learning rate of $1\times 10^{-3}$ that is divided by 10 at the 120th and 200th epochs.

To account for the training stability, we conduct six independent runs and show their learning curves (mean $\pm$ standard deviation) in \autoref{figure-train-learn-curves-SL}, achieving an average training loss of $1.95 \times 10^{-5}$. We retain the model that attains the minimum training loss of $1.46 \times 10^{-5}$ for inference, which demonstrates satisfactory alignment with the target output across distinct sample realizations as depicted in \autoref{figure-train-results-SL}.

\subsubsection{Generalization ability} 

Note that our trainable parameters are robust to the uncertainty in stochastic coefficients, while the model prediction can adapt to individual coefficient realizations. To evaluate the generalization performance of our learned surrogate model, a testing dataset $\{A_{\omega_\ell}, F_{\omega_\ell}^* \}_{\ell=1}^L$ is independently generated for validation, comprising $L = 1 \times 10^3$ unseen coefficient realizations as well as their numerical optimal pathwise controls.

To provide a performance baseline, the robust deterministic control $\hat{f}_h^*(x)$ \cite{sun2020distributed} is numerically computed via the standard Monte Carlo finite element method, utilizing the same coefficient realizations and iterative settings established in section~\ref{Sec-NumExps-RpCtrl-SL-train}. A quantitative comparison is given in \autoref{Table-TestPerformance-RpCtrl-data-driven}, evaluated in terms of the averaged state tracking error and control effort across the testing dataset for a typical run, that is,
\begin{equation*}
	\frac{1}{L} \sum_{\ell=1}^L \lVert v_{h,\omega_\ell}(x) - U(x) \rVert_{L^2(D)} \ \ \ \textnormal{and}\ \ \  \frac{1}{L} \sum_{\ell=1}^L \lVert g_{h,\omega_\ell}(x) \rVert_{L^2(D)},
\end{equation*}
where $v_{h,\omega_\ell}(x)$ denotes the state function driven by the control function $g_{h,\omega_\ell}(x)$ for the coefficient realization $a_{h,\omega_\ell}(x)$. Here, the control function $g_{h,\omega_\ell}(x)$ is instantiated as either the robust deterministic control, our proposed data-driven robust-pathwise control, or the pathwise control. As demonstrated in \autoref{Table-TestPerformance-RpCtrl-data-driven}, our method achieves a markedly lower tracking error than the commonly used robust deterministic baseline at a negligible inference time and removes the iterative computational burden of the pathwise control without compromising target alignment.

\begin{table}[b!]
\label{Table-TestPerformance-RpCtrl-data-driven}
\vspace{-0.1cm}
\centering
\renewcommand{\arraystretch}{1.4}
\caption{Generalization performance of robust deterministic control $\hat{f}^*_h(x)$, data-driven robust-pathwise control $f^{\ms\theta^*}_{h,\omega}(x)$, and pathwise control $f^*_{h,\omega}(x)$, compared via tracking error and runtime.}
\vspace{-0.1cm}
\label{Table-Runtime-Testdata-KLE} 
    \begin{tabular}{ | c | c || *{3}{>{\centering\arraybackslash}p{20.0mm}|} } 
    \hline
    \multicolumn{2}{|>{\columncolor{kugray5}}c||}{Generalization Test} & \multirow{2}{*}{$\hat{f}^*_h(x)$} & \multirow{2}{*}{$f^{\ms\theta^*}_{h,\omega}(x)$} & \multirow{2}{*}{$f^*_{h,\omega}(x)$} \\
    \multicolumn{2}{|>{\columncolor{kugray5}}c||}{$\ell \!=\! 0.2$, $\sigma \!=\! 10$, $M\!=\!166$} &  &  & \\    
    \hline   					    	
    offline & runtime (s) & 17235.70 & 25868.96 & 0 \\
    \hline   
    \multirow{3}{*}{online} & runtime (s) & 0  & 0.62 & 4594.66  \\
    \cline{2-5} 
    	& tracking error & $1.35\times10^{-1}$  & $3.29\times10^{-2}$ & $3.26\times10^{-2}$  \\
    	\cline{2-5} 
    	& control effort & 45.29 & 48.52 & 48.48  \\    
    \hline   					    	   							                                                                    
  \end{tabular} 
\end{table}

Additionally, empirical distributions of the tracking error and control effort over individual realizations in the testing dataset are compared in \autoref{Fig-TestPerformance-RpCtrl-data-driven}. To provide a visual comparison, \autoref{figure-Adaptivity-RpCtrl-data-driven} illustrates two representative scenarios where the robust deterministic baseline exhibits adequate and degraded tracking performance. Owing to its inherent adaptivity and generalization capability to stochastic coefficients, our trained surrogate model achieves superior target matching in both instances.

\begin{figure}[t!]
\centering
\includegraphics[width=0.75\textwidth]{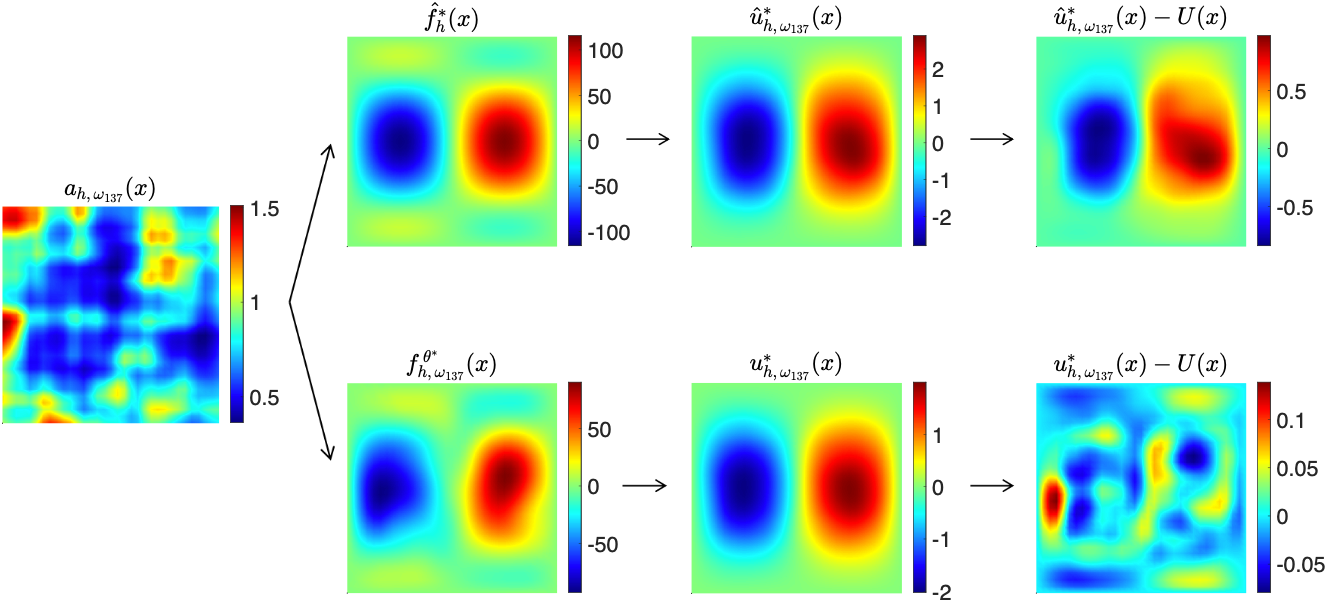}

\vspace{0.2cm}

\includegraphics[width=0.75\textwidth]{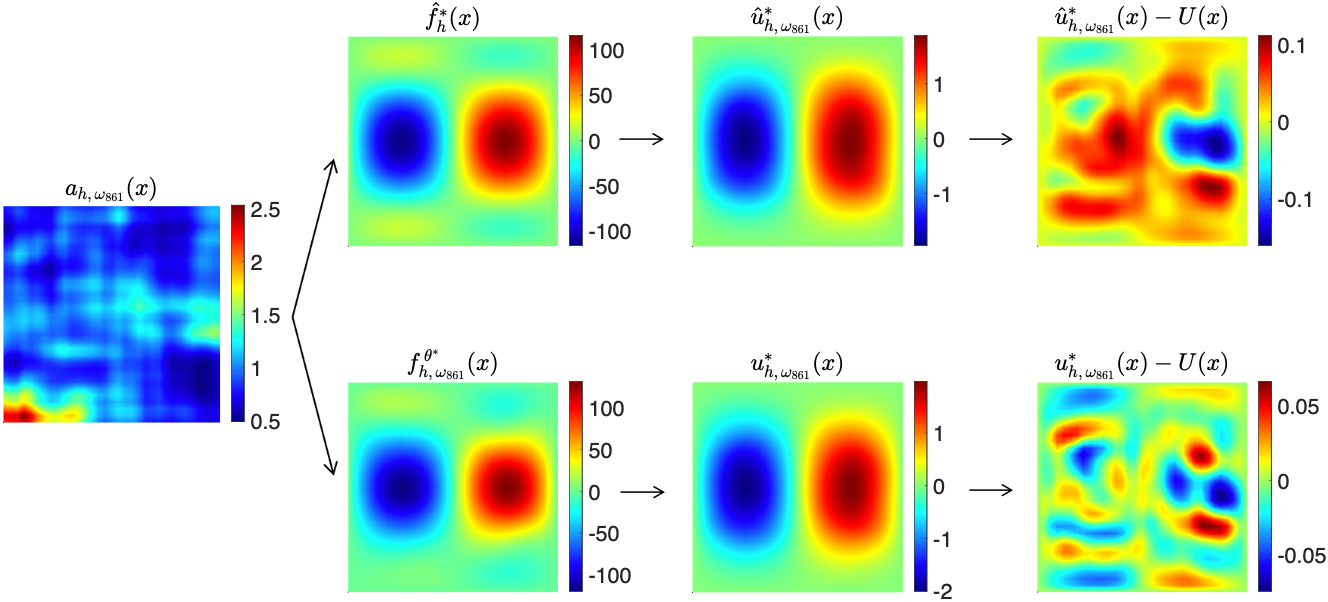}
\vspace{-0.2cm}
\caption{Application of robust deterministic control $\hat{f}^*_h(x)$ and data-driven robust-pathwise control $f^{\ms \theta^*}_{h,\omega}(x)$ to unseen coefficient realizations, assessed in terms of state tracking errors.}
\label{figure-Adaptivity-RpCtrl-data-driven}
\end{figure}

\begin{figure}[t!]
\centering
\includegraphics[width=0.9\textwidth]{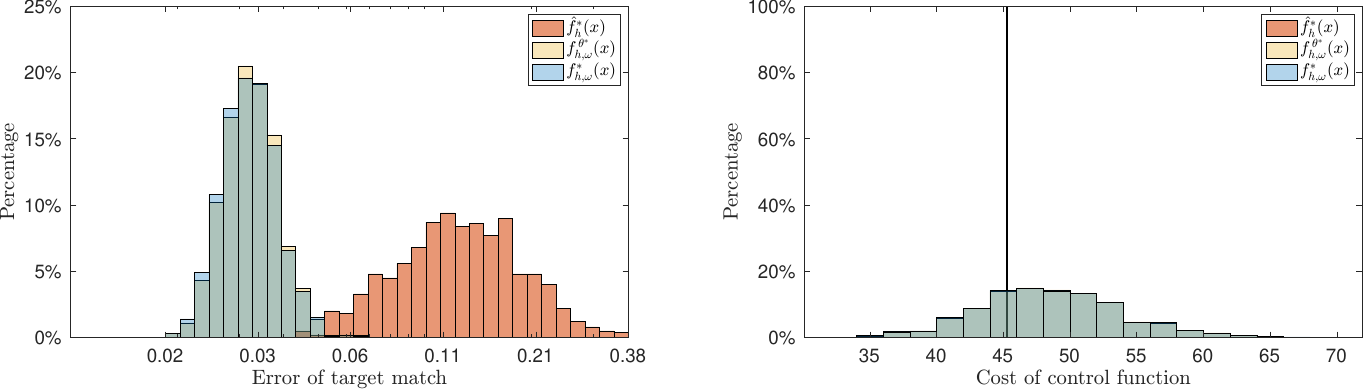}
\vspace{-0.3cm}
\caption{Empirical distributions of tracking error and control effort over individual realizations. }
\label{Fig-TestPerformance-RpCtrl-data-driven}
\vspace{-0.3cm}
\end{figure}

\subsubsection{Architectural ablation of neural operators} 

While previous numerical experiments adopt a U-Net model \cite{ronneberger2015u,jia2025novel,azizzadenesheli2024neural}, our proposed data-driven training algorithm~\ref{Alg-RpCtrl-DataDriven} seamlessly accommodates alternative neural operators, acknowledging that distinct architectures exhibit varying approximation bounds, generalization and optimization complexity. To investigate the effect of architecture design on tracking performance, this section presents an ablation study comparing our U-Net surrogate model against the deep operator network \cite{lu2021learning} and the Fourier neural operator \cite{li2020fourier}.

All models are trained and evaluated on the identical datasets established in the preceding section, with specific hyperparameter settings deferred to \autoref{tab:app_arch} for simplicity. As reported in \autoref{Table-TestPerformance-RpCtrl-data-driven-DeepONet-FNO}, the U-Net model achieves superior generalization performance compared to the other two prominent neural operators and is therefore utilized for all our subsequent numerical experiments. The performance degradation of the Fourier neural operator can be attributed to the truncation of high-frequency modes, which may limit its ability to resolve the oscillatory coefficient realizations.

\begin{table}[t!]
\label{Table-TestPerformance-RpCtrl-data-driven-DeepONet-FNO}
\centering
\renewcommand{\arraystretch}{1.4}
\caption{Comparison of generalization performance across different neural operator architectures.}
\vspace{-0.1cm}
\setlength{\tabcolsep}{4pt} 
\begin{tabular}{lccccc}
\hline
 & Parameters & Testing Error & Tracking Error & Control Effort \\
\hline
U-Net & 13.39\,$m$ & $1.15 \times 10^{-5}$ & $3.29 \times 10^{-2}$ & $48.52$ \\
DeepONet & 1.61\,$m$ & $9.72 \times 10^{-5}$ &$3.44 \times 10^{-2}$ & $48.50$ \\
FNO      & 9.46\,$m$ & $4.12 \times 10^{-4}$ &$3.54 \times 10^{-2}$ & $48.49$ \\       
\hline
\end{tabular}
\end{table}

\begin{table}[t!]
\label{tab:app_arch}
\centering
\small
\setlength{\tabcolsep}{3pt}
\setlength{\extrarowheight}{3pt}
\caption{List of hyper-parameter configurations for neural operators in our numerical experiments, more details will be made publicly accessible at \url{https://github.com/q1sun/RpCtrl_StoPDEs}.}
\vspace{-0.1cm}
\begin{tabular}{ c c c c c c }
\hline
\textbf{Model} & \textbf{Section} & \textbf{ \makecell[c]{Train/Test \\ Data Size}} & \textbf{Batch Size} & \textbf{\makecell[c]{ Training Epochs \\ (Decay Milestones)}} & \textbf{\makecell[c]{ Learning \\ Rate }} \\
\hline
\makecell[c]{} & \makecell[c]{5.1.1 \\ 5.1.2 \\5.1.3 } & \makecell[c]{$(5k,1k)$} & \makecell[c]{100} & \makecell[c]{\makecell[c]{ 250 \\ $(120,200)$ } } & \makecell[c]{$1\times10^{-3}$} \\
\cline{2-6}
U-Net \cite{ronneberger2015u} & \makecell[c]{5.1.4} & \makecell[c]{$(5k,1k)$} & \makecell[c]{100} & \makecell[c]{\makecell[c]{ 250 \\ $(120,200)$ } } & \makecell[c]{$5\times10^{-3}$} \\
\cline{2-6}
\makecell[c]{} & \makecell[c]{5.2} & \makecell[c]{$(5k,1k)$} & \makecell[c]{250} & \makecell[c]{\makecell[c]{ 200 \\ $(160,180)$ } } & \makecell[c]{$4\times10^{-3}$} \\
\hline
DeepONet \cite{lu2021learning} & \makecell[c]{5.1.3} & \makecell[c]{$(5k,1k)$} & \makecell[c]{100} & \makecell[c]{\makecell[c]{ 1000 \\ $(600,800,900)$ } } & \makecell[c]{$1\times10^{-3}$} \\
\hline
FNO \cite{li2020fourier} & \makecell[c]{5.1.3} & \makecell[c]{$(5k,1k)$} & \makecell[c]{100} & \makecell[c]{\makecell[c]{ 100 \\ $(60,80)$ } } & \makecell[c]{$1\times10^{-3}$} \\
\hline
\end{tabular}
\vspace{-0.3cm}
\end{table}

Despite the discrepancy in generalization performance, all three neural operator surrogate models surpass the baseline robust deterministic control by approximately an order of magnitude, even in our absence of hyperparameter refinement.

\begin{table}[b!]
\vspace*{-0.3cm}
\label{Table-TestPerformance-RpCtrl-data-driven-rougher}
\centering
\renewcommand{\arraystretch}{1.4}
\vspace*{-0.2cm}
\caption{Generalization performance of different control strategies for rougher coefficients.}
\label{Table-Runtime-Testdata-KLE} 
    \begin{tabular}{ | c | c || *{3}{>{\centering\arraybackslash}p{20.0mm}|} } 
    \hline
    \multicolumn{2}{|>{\columncolor{kugray5}}c||}{Generalization Test} & \multirow{2}{*}{$\hat{f}^*_h(x)$} & \multirow{2}{*}{$f^{\ms \theta^*}_{h,\omega}(x)$} & \multirow{2}{*}{$f^*_{h,\omega}(x)$} \\
    \multicolumn{2}{|>{\columncolor{kugray5}}c||}{$\ell \!=\! 0.08$, $\sigma \!=\! 10$, $M\!=\!807$} &  &  & \\    
    \hline   					    	
    offline & runtime (s) & 273952.27 & 334348.67 & 0 \\
    \hline   
    \multirow{3}{*}{online} & runtime (s) & 0  & 2.04 & 72156.52  \\
    \cline{2-5} 
    	& tracking error & $4.48\times10^{-2}$  & $1.66\times10^{-2}$ & $1.66\times10^{-2}$  \\
    	\cline{2-5} 
    	& control effort & 47.39 & 47.81 & 47.82  \\    
    \hline   					    	   						                                                                    
  \end{tabular} 
\end{table}

\subsubsection{Robust-pathwise control for rougher coefficients} 

To conclude our numerical study of data-driven robust-pathwise control (Algorithm~\ref{Alg-RpCtrl-DataDriven}), we examine the covariance function \eqref{Sto-Coeff-Cov} with a reduced correlation length $\ell=0.08$, maintaining $\sigma=10$ and $\lVert\cdot\rVert=\lVert\cdot\rVert_{L^1(D)}$ as defined in section~\ref{Sec-NumExps-RpCtrl-SL-train}. Accordingly, the eigenvalues of problem \eqref{KLE-Generalized-Eigenprob} decay slowly, leading to rougher coefficients with an increased number of truncation terms. To be precise, capturing $95\%$ of the energy necessitates at least $M\geq 807$ in \eqref{Sto-Coeff-KLE}, posing a computational challenge for conventional methods due to the curse of dimensionality.

Employing a mesh size of $h=2^{-6}$ in Algorithm~\ref{Alg-RpCtrl-DataDriven}, we evaluate our data-driven robust-pathwise control against alternative control strategies across $L=1\times 10^3$ test samples and report the generalization performance in \autoref{Table-TestPerformance-RpCtrl-data-driven-rougher} and \autoref{figure-train-results-SL-rougher}. Despite the increased roughness of coefficient realizations, our approach effectively adapts to the underlying input samples, yielding a notable reduction in tracking error relative to the robust deterministic benchmark. Remarkably, the end-to-end inference across $L=1\times 10^3$ unknown input coefficient samples requires only 2.04 seconds, bypassing the adjoint-based iteration without sacrificing accuracy relative to the pathwise control.

\begin{figure}[t!]
\centering
\includegraphics[width=0.85\textwidth]{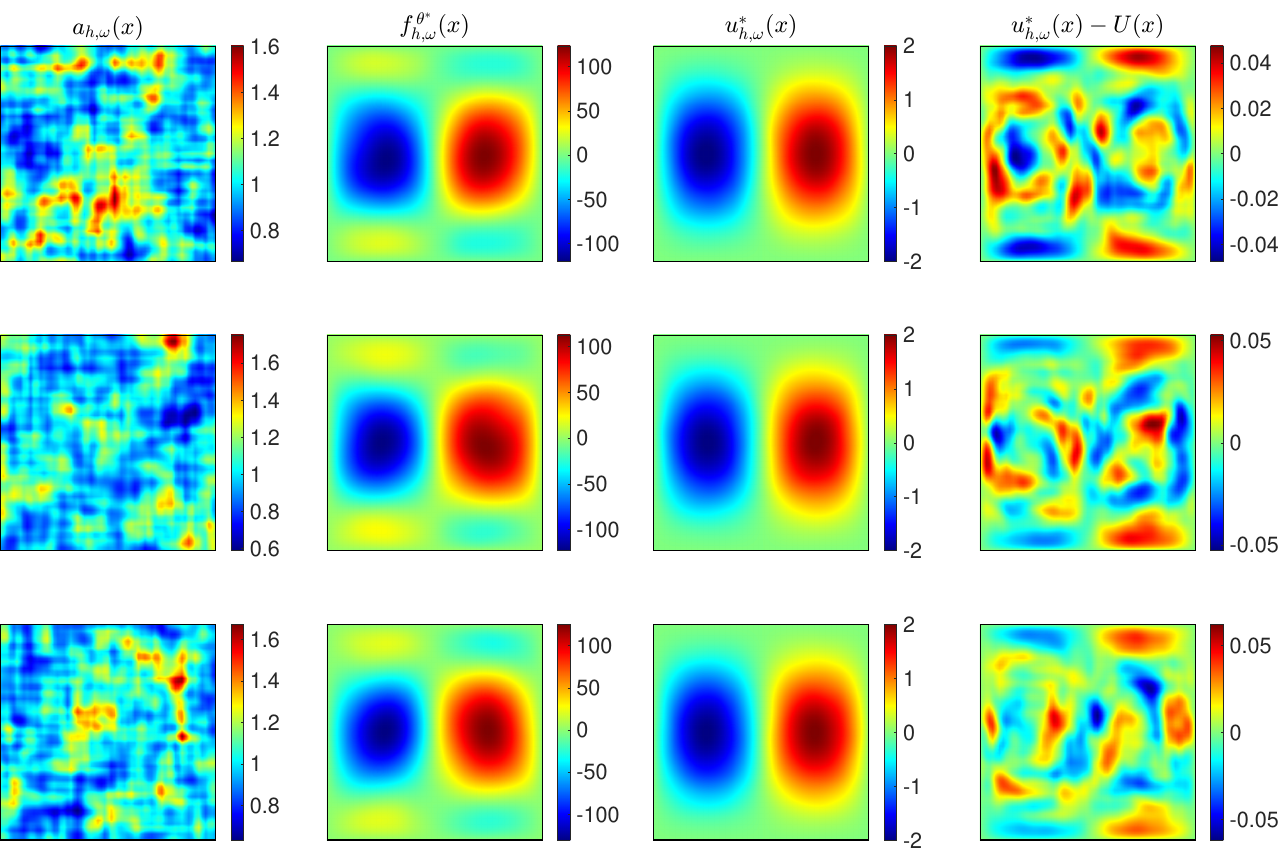}
\vspace{-0.1cm}
\caption{Left to right: coefficient realizations with $\ell = 0.08$, data-driven robust-pathwise control predictions via trained model, state solutions and their tracking error profiles.}
\label{figure-train-results-SL-rougher}
\vspace{-0.3cm}
\end{figure}


\subsection{Physics-informed robust-pathwise control} 

While the effectiveness of data-driven robust-pathwise control is numerically validated in the previous section, the supervised paradigm is hindered by the computationally expensive generation of high-fidelity optimal pathwise control functions. To eliminate such data dependency, this section offers a numerical investigation on the effectiveness and efficiency of our physics-informed approach detailed in Algorithm~\ref{Alg-RpCtrl-PhysicsInformed}.

\subsubsection{Training performance} 

To facilitate a fair comparison with the supervised approach, our physics-informed robust-pathwise control model is trained using the identical configuration presented in section~\ref{Sec-NumExps-RpCtrl-SL-train}. Specifically, a training dataset $\{ A_{\omega_k}\in \mathbb{R}^{33\times 33} \}_{k=1}^{K=5\times 10^3}$ and a testing dataset $\{ A_{\omega_\ell}\in \mathbb{R}^{33\times 33} \}_{\ell=1}^{L=1\times 10^3}$ are employed without labelled optimal controls in Algorithm~\ref{Alg-RpCtrl-PhysicsInformed}, alongside the same U-Net architecture utilized in section~\ref{Sec-NumExps-RpCtrl-SL-train} (see also \autoref{figure-Unet-SL}). 

Unlike the supervised training and testing losses depicted in \autoref{figure-train-learn-curves-SL}, our unsu- pervised approach proceeds via a label-free optimization. As such, the objective loss values monitored during training are defined as the discretization of functional \eqref{Sto-Ctl-Objective-Strong} over the training $\mathcal{D}_{\text{train}} = \{ A_{\omega_k} \}_{k=1}^K$ and testing $\mathcal{D}_{\text{test}} = \{ A_{\omega_\ell} \}_{\ell=1}^L$ dataset, namely,
\begin{equation*}
\textnormal{Train/Test Loss} = \frac{1}{\vert{}\mathcal{D}\vert{}} \sum_{\omega \in \mathcal{D}} \left( \frac{1}{2} \Vert{} u_{h,\omega}-U \Vert{}_2^2 + \frac{\beta}{2} \Vert{} f_{h,\omega}^{\ms\theta} \Vert{}_2^2 \right).
\end{equation*}
We execute six independent runs and show in \autoref{figure-train-learn-curves-unSL} their learning curves (mean $\pm$ standard deviation). Although our model parameters are updated in a supervised manner \eqref{RpCtl-Loss-Unsupervised-MiniBatch}, both training and testing loss trajectories achieve satisfactory convergence, validating the effectiveness of our physics-informed method in minimizing the objective functional \eqref{Sto-Ctl-Objective-Strong} of robust stochastic control problem \eqref{Sto-Ctl-Weak}.

\begin{figure}[H]
\centering
\includegraphics[width=0.6\textwidth]{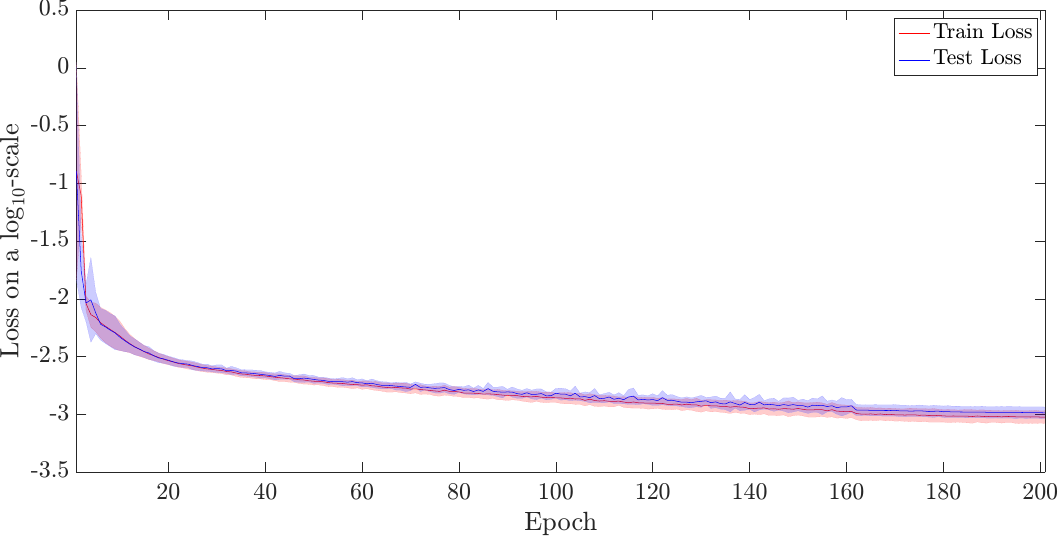}
\vspace*{-0.3cm}
\caption{Unsupervised learning curves (mean $\pm$ standard deviation) over 6 independent runs.}
\label{figure-train-learn-curves-unSL}
\end{figure}

\begin{figure}[H]
\centering
\includegraphics[width=0.6\textwidth]{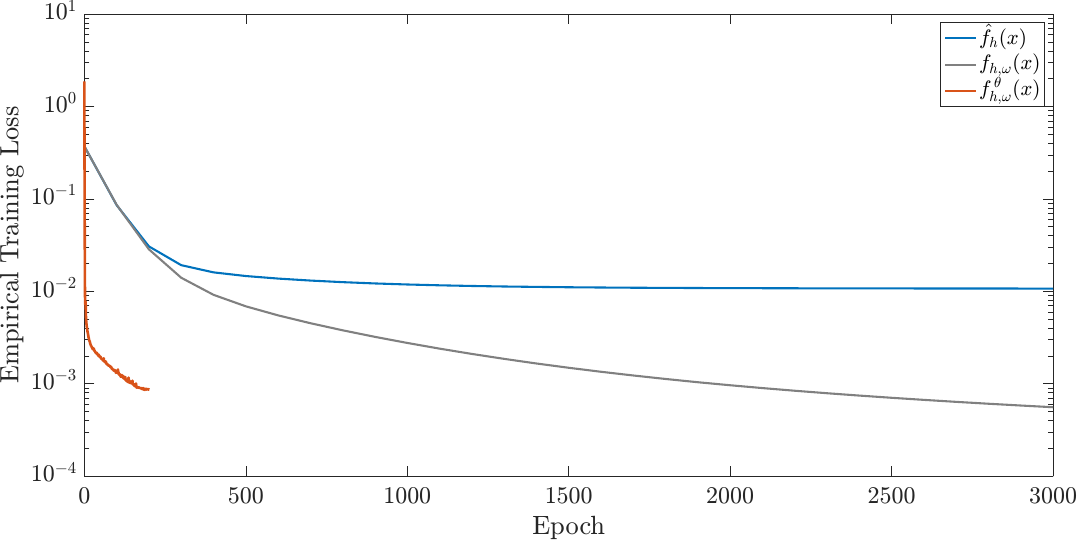}
\vspace*{-0.3cm}
\caption{Comparison of convergence trajectories for empirical objective evaluated against epochs.}
\label{figure-comparison-efficiency-unSL}
\end{figure}

It is also worth noting that our physics-informed surrogate model exploits mini-batch training to achieve rapid generalization across all sample realizations, whereas conventional robust deterministic and pathwise controls require a complete sweep of the full dataset to perform a single control update step. To empirically demonstrate the superior computational efficiency of our learning algorithm, a comparative study against the robust deterministic and pathwise controls is illustrated in \autoref{figure-comparison-efficiency-unSL}, in which a single epoch corresponds to one complete sweep of the training dataset.

Specifically, a single control update in our method requires only a mini-batch of size $B=250$, which roughly corresponds to $B/K=\frac{1}{20}$ steps of classical approaches. As shown in \autoref{figure-comparison-efficiency-unSL}, our proposed method exhibits a drastically steeper reduction in the discrete objective functional, thereby effectively mitigating the computational expense associated with repeated evaluations of the state and adjoint equations.


\subsubsection{Generalization ability} 

To evaluate the generalization capacity of our physics-informed control surrogate, a comparative study against the classical robust deterministic, data-driven robust-pathwise, and pathwise controls is provided in this section. Evaluation is conducted on a testing dataset containing $L = 1 \times 10^3$ unseen coefficient realizations, with numerical results reported in \autoref{figure-test-results-unSL} and \autoref{Table-TestPerformance-RpCtrl-physics-informed}.

As depicted in \autoref{figure-test-results-unSL}, the control predictions generated by our unsupervised surrogate model adapt well to various unseen coefficient realizations, attaining satisfactory target matching performance across the entire testing dataset. Furthermore, \autoref{Table-TestPerformance-RpCtrl-physics-informed} implies that the physics-informed robust-pathwise control achieves accuracy comparable to its data-driven counterpart with considerably reduced offline training time, corroborating our computational efficiency statements discussed in section~\ref{Sec-RpCtrl-physics-informed-algorithm}. On the other hand, our neural operator-based control surrogate is clearly preferable to the robust deterministic control baseline under both supervised and unsupervised learning regimes.

\begin{figure}[t!]
\centering
\includegraphics[width=0.85\textwidth]{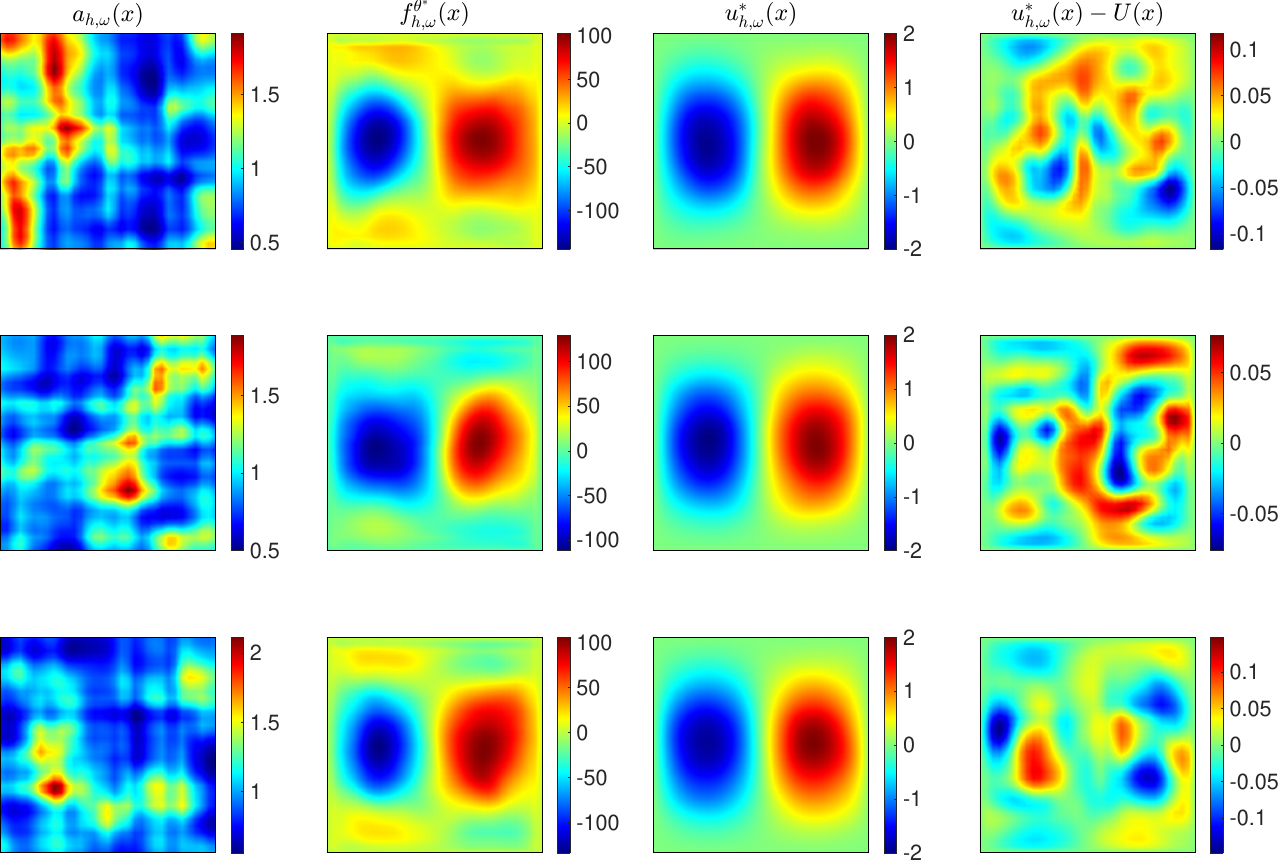}
\vspace*{-0.3cm}
\caption{Left to right: unknown coefficient realizations, physics-driven robust-pathwise controls via trained model, state solutions and their tracking error profiles.}
\label{figure-test-results-unSL}
\end{figure}

\begin{table}[t!]
\centering
\renewcommand{\arraystretch}{1.4}
\caption{Generalization performance of different control approaches  (offline runtime of our data-driven robust-pathwise control consists of data generation 24149.86s and network training 1719.10s). }
\label{Table-TestPerformance-RpCtrl-physics-informed}
\resizebox{\textwidth}{!}{
    \begin{tabular}{ | c | c || c | c | c | c | } 
    \hline
    \multicolumn{2}{|>{\columncolor{kugray5}}c||}{Generalization Test} & \multirow{2}{*}{$\hat{f}^*_h(x)$} &  \multicolumn{2}{c|}{$\mathcal{N}_{\theta^*}(a_{h,\omega}(x))$} & \multirow{2}{*}{$f^*_{h,\omega}(x)$} \\
    \cline{4-5} 
    \multicolumn{2}{|>{\columncolor{kugray5}}c||}{$\ell \!=\! 0.2$, $\sigma \!=\! 10$, $M\!=\!166$} &  & phys-driven & data-driven & \\    
    \hline   					    	
    offline & runtime (s) & 17235.70 & 2791.18 & 25868.96 & 0 \\
    \hline   
    \multirow{3}{*}{online} & runtime (s) & 0  & 0.62 & 0.62 & 4594.66  \\
    \cline{2-6} 
    	& tracking error & $1.35\times10^{-1}$  & $4.09\times10^{-2}$ & $3.29\times10^{-2}$ & $3.26\times10^{-2}$  \\
    	\cline{2-6} 
    	& control effort & 45.29 & 48.91 & 48.52 & 48.48  \\    
    \hline   					    	   						                                                                    
  \end{tabular} 
}
\end{table}

Notably, while the data-driven robust-pathwise control deploys the AdamW optimizer for training, its control targets are determined through a first-order gradient descent scheme, which ultimately limits the accuracy of trained models. In contrast, the optimization of the weakly supervised loss function \eqref{RpCtl-Loss-Unsupervised} easily accommodates acceleration techniques beyond the vanilla mini-batch gradient descent. 


\section{Conclusion}

In this paper, a neural operator-based control strategy is proposed for solving the elliptic optimal control problem with high-dimensional random input data, alongside both data-driven and physics-informed training algorithms for its practical implementation. By directly learning the solution mapping from coefficient realizations to their optimal pathwise control functions, our methods integrate robustness and adaptivity within a single end-to-end surrogate model, obviating the need to compromise between deterministic and stochastic control formulations. Our data-driven training algorithm offers a direct and tractable process, but necessitates extensive high-quality input-output data pairs. To circumvent the data dependency, a physics-informed unsupervised training algorithm is then proposed and recast into a supervised fashion using pseudo labels produced by adjoint-based control updates. Numerical experiments involving hundreds of stochastic dimensions are presented to demonstrate the effectiveness and efficiency of trained surrogate models, showcasing superior tracking accuracy over the robust deterministic control within seconds.

Future work will focus on extending our proposed data-driven/physics-informed robust-pathwise control strategies to broader classes of parametric PDE-constrained optimization problems in scientific and engineering applications, alongside numerical and theoretical investigations into higher-order iterative schemes. In particular, this study adopts a negligible regularization parameter $\beta=10^{-10}$ that leaves the control function nearly unconstrained, motivating future research into the imposition of box constraints to enforce explicit bounds on the control function.


\section*{Acknowledgements}

Qi Sun would like to thank Bin Dong for his support during the early stages of this research. The computations were done on the high performance computers of School of Mathematical Sciences, Tongji University.


\bibliographystyle{siamplain}
\bibliography{references}
\end{document}